\newtheorem{lemma}{Lemma}
\newtheorem{remark}{Remark}[section]
\newtheorem{theorem}{Theorem}
\renewenvironment{proof}{ 
\noindent{\bf Proof.} \rm}{\penalty-20\null\hfill $\square$}
\numberwithin{equation}{section}
\newcommand{\N}{{\mathbb N}}
\newcommand{\R}{{\mathbb R}}
\newcommand{\Z}{{\mathbb Z}}
\newcommand{\bfn}{\mathbf{n}}
\newcommand{\bfu}{\mathbf{u}}
\newcommand{\bfv}{\mathbf{v}}
\newcommand{\bfw}{\mathbf{w}}
\newcommand{\bfx}{\mathbf{x}}
\newcommand{\bfzeta}{\mbox{\boldmath $\zeta$}}
\newcommand{\bfomega}{\mbox{\boldmath $\omega$}}
\newcommand{\bftau}{\mbox{\boldmath $\tau$}}
\newcommand{\bfC}{\mathbf{C}}
\newcommand{\bfL}{\mathbf{L}}
\newcommand{\bfV}{\mathbf{V}}
\newcommand{\bfW}{\mathbf{W}}
\newcommand{\rmd}{\mathrm{d}}
\renewcommand{\div}{\mathrm{div}\,}
\newcommand{\bfzero}{\mathbf{0}}
\newcommand{\supp}{\mathrm{supp}\,}
\newcommand{\curl}{\mathbf{curl}}
\newcommand{\br}{\hbox to 0.7pt{}}
\newcommand{\Ls}{\bfL^2_{\sigma}(\R^3)}
\newcommand{\Lsp}{\Ls^+}
\newcommand{\Lsm}{\Ls^-}
\newcommand{\Ws}{\bfW^{1,2}_{\sigma}(\R^3)}
\newcommand{\Wsdual}{\bfW^{-1,2}_{\sigma}(\R^3)}
\newcommand{\bfomegae}{\bfomega_{e}}
\newcommand{\bfve}{\bfv_{e}}
\newcommand{\BRR}{B_{R+1}}
\newcommand{\CRR}{C_{R+1}}
\newcommand{\Spe}{{\rm Sp}}
\newcounter{constants}
\newcommand{\cn}[2]{ \addtocounter{constants}{1}
\newcounter{c#1#2}
\setcounter{c#1#2}{\value{constants}} c_{\arabic{c#1#2}} }
\newcommand{\cc}[2]{c_{\arabic{c#1#2}}}
\begin{document}

\title{\LARGE \bf Regularity of a Weak Solution to the Navier--Stokes
Equations via One Component of a Spectral \\ Projection of
Vorticity}

\author{Ji\v{r}\'{\i} Neustupa \ and \ Patrick Penel}

\date{}

\maketitle

\begin{abstract}
We deal with a weak solution $\bfv$ to the Navier--Stokes initial
value problem in $\R^3\times(0,T)$. We denote by $\bfomega^+$ a
spectral projection of $\bfomega\equiv\curl\, \bfv$, defined by
means of the spectral resolution of identity associated with the
self--adjoint operator $\curl$. We show that certain conditions
imposed on $\bfomega^+$ or, alternatively, only on $\omega^+_3$
(the third component of $\bfomega^+$) imply regularity of solution
$\bfv$.
\end{abstract}

\noindent
{\it AMS math.~classification (2000):} \ 35Q30, 76D03, 76D05.

\noindent
{\it Keywords:} \ Navier--Stokes equations, weak solution,
regularity criteria, vorticity.

\section{Introduction} \label{S1}

{\bf The Navier--Stokes problem.} \ Let $T>0$. We denote
$Q_T:=\R^3\times(0,T)$. We deal with the Navier--Stokes initial
value problem
\begin{align}
\partial_t\bfv\, +\, &\bfv\cdot\nabla\bfv\ =\ -\nabla p+\nu\Delta\bfv &&
\mbox{in}\ Q_T, \label{1.1} \\ \noalign{\vskip 2pt}
& \div\bfv\ =\ 0 \qquad && \mbox{in}\ Q_T, \label{1.2} \\
\noalign{\vskip 2pt}
& \bfv(\bfx,\, .\, )\, \longrightarrow\, 0 && \mbox{for}\
|\bfx|\to\infty, \label{2a} \\ \noalign{\vskip 2pt}
& \bfv(\, .\, ,0)\ =\ \bfv_0 && \mbox{in}\ \R^3 \label{1.3}
\end{align}
for the unknown velocity $\bfv$ and pressure $p$. The symbol $\nu$
denotes the coefficient of viscosity. It is usually assumed to be
a positive constant. Since its value plays no role throughout the
paper, we assume that $\nu=1$.

We assume that $\bfv$ is a weak solution of the problem
(\ref{1.1})--(\ref{1.3}). (This notion was introduced by Leray
\cite{Le}, the exact definition is also given e.g.~in \cite{Ga2}.)
In accordance with \cite{CKN}, we define a {\it regular point} of
solution $\bfv$ as a point $(\bfx,t)\in Q_T$ such that there
exists a space--time neighbourhood of $(\bfx,t)$, where $\bfv$ is
essentially bounded. Points in $Q_T$ that are not regular are
called {\it singular.} The question whether a weak solution can
develop a singularity at some time instant $t_0\in(0,T]$ or if all
points $(\bfx,t)\in Q_T$ are regular points is an important open
problem in the theory of the Navier--Stokes equations. There exist
many a posteriori criteria, stating that if the weak solution has
certain additional properties then it has no singular points (in
the whole $Q_T$ or at least in a sub--domain of $Q_T$). The
studies of such criteria have been mainly motivated by Leray
\cite{Le} (who proved that if the weak solution belongs to the
class $L^r(0,T;\, \bfL^s(\R^3))$, where $3<s\leq\infty$ and \
$2/r+3/s=1$, then it is infinitely differentiable in $Q_T$) and by
Serrin \cite{Se} (who proved a certain analog of Leray's
criterion, applicable in a sub--domain of $Q_T$). Exact citations
and further details on this topic can be found in the survey paper
\cite{Ga2} by Galdi.

\vspace{4pt} \noindent
{\bf On some previous results.} \ Here, we focus on regularity
criteria that impose additional conditions on some components of
velocity $\bfv=(v_1,v_2,v_3)$ or its gradient $\nabla\bfv$ or the
corresponding vorticity $\bfomega=(\omega_1,\omega_2,\omega_3)$.

The first result on regularity as a consequence of an a posteriori
assumption on one velocity component appeared in \cite{NePe1}: the
authors considered the problem in a domain $\Omega\subset\R^3$,
assumed that the component $v_3$ is essentially bounded in a
space--time region $D\subset\Omega\times(0,T)$, and proved that
$\bfv$ has no singular points in $D$. This result has been later
successively improved in \cite{NeNoPe} ($v_3$ is only supposed to
be in $L^r(t_1,t_2;\, L^s(D'))$ for all sets
$D'\times(t_1,t_2)\subset D$ and some $r\in[4,\infty]$,
$s\in(6,\infty]$ satisfying $2/r+3/s\leq\frac{1}{2}$),
\cite{NePe2} (the authors generalize the result from \cite{NeNoPe}
and combine an assumption on $v_3$ with assumptions on $v_1$,
$v_2$), \cite{KuZi2} ($v_3$ is only assumed to be in $L^r(0,T;\,
L^s(\R^3))$, where $2/r+3/s=\frac{5}{8}$ for
$r\in[\frac{16}{5},\infty)$ and $s\in(\frac{24}{5},\infty]$),
\cite{CaTi} (the authors consider the spatially periodic problem
in $\R^3$ and use the condition $2/r+3/s<\frac{2}{3}+2/(3s)$,
$s>\frac{7}{2}$), and \cite{ZhPo1} (the exponents $r$, $s$ are
supposed to satisfy the conditions $2/r +3/s\leq\frac{3}{4}+
1/(2s)$, $s>\frac{10}{3}$).

Of a series of papers, where the authors deal with the question of
regularity of weak solution $\bfv$ in dependence on certain
integrability properties of some components of the tensor
$\nabla\bfv$, we mention \cite{BdV}, \cite{ChaCho}, \cite{KuZi1},
\cite{KuZi2}, \cite{ZhPo1}, \cite{ZhPo2} and \cite{PePo}. In paper
\cite{ChaCho}, the authors prove regularity of solution $\bfv$ by
means of conditions imposed on only two components of vorticity.
They assume that the initial velocity $\bfv_0$ is ``smooth'' and
$\omega_1$, $\omega_2\in L^r(0,T;\, L^s(\R^3))$ with $1<r<\infty$,
$\frac{3}{2}<s<\infty$, $2/r+3/s\leq 2$ or the norms of $\omega_1$
and $\omega_2$ in $L^{\infty}(0,T;\, L^{3/2}(\R^3))$ are
``sufficiently small''. It is a challenging open problem to show
whether regularity of weak solution $\bfv$ can be controlled by
only one component of vorticity.

The cited criteria that concern the solution in the whole space
hold for any weak solution, while the interior regularity criteria
hold for the so called suitable weak solution because here we need
to apply an appropriate localization procedure (see
e.g.~\cite{NePe2}, the concept of suitable weak solutions has been
introduced in \cite{CKN}).

The results mentioned above represent attempts to find a minimum
quantity which controls regularity of the solution. If such a
quantity is in some sense smooth or integrable then the weak
solution is smooth. On the other hand, each such quantity
necessarily loses smoothness if a singular point shows up. Thus,
the criteria contribute to understanding the behaviour of the
solution in the neighbourhood of a hypothetic singular point. The
presented paper brings results in this field. The quantity, which
is assumed to be ``smooth'' in this paper, is either a certain
spectral projection of vorticity or only one component of that
spectral projection. The projection is defined by means of the
spectral resolution of identity associated with operator $\curl$,
see (\ref{1.4}). In the case of only one component, we need to
impose a stronger condition on its regularity than in the case of
all three components, see Theorems \ref{T1} and \ref{T2}.

\vspace{4pt} \noindent
{\bf Notation and auxiliary results.} \ We denote vector functions
and spaces of such functions by boldface letters. The norm in
$L^q(\R^3)$ (or $\bfL^q(\R^3)$) is denoted by $\|\, .\, \|_{q;\,
\R^3}$. The scalar product in $\bfL^2(\R^3)$ is denoted by $(\,
.\, ,\, .\, )_{2;\, \R^3}$. The norm in $W^{s,q}(\R^3)$ (or
$\bfW^{s,q}(\R^3)$) is denoted by $\|\, .\, \|_{s,q;\, \R^3}$.
Other norms and scalar products are denoted by analogy.

The space $\Ls$ is a completion of
$\bfC^{\infty}_{0,\sigma}(\R^3)$ (the linear space of infinitely
differentiable divergence--free vector functions in $\R^3$ with a
compact support) in $\bfL^2(\Omega)$. The intersection
$\bfW^{1,2}(\R^3)\cap\Ls$ is denoted by $\Ws$. It is a closed
subspace of $\bfW^{1,2}(\R^3)$. Note that $\|\nabla\bfu\|_{2;\,
\R^3}^2=\|\curl\, \bfu\|_{2;\, \R^3}^2$ for $\bfu\in\Ws$.
Consequently, $\|\, .\, \|_{1,2;\, \R^3}^2=\|\, .\, \|_{2;\,
\R^3}^2+\|\curl\; .\, \|_{2;\, \R^3}^2$ in $\Ws$. The dual to
$\Ws$ is denoted by $\Wsdual$.

The operator $(-\Delta)$, with the domain $W^{2,2}(\R^3)$
(respectively $\bfW^{2,2}(\R^3)$), is positive and self--adjoint
in $L^2(\R^3)$ (respectively in $\bfL^2(\R^3)$). Its spectrum is
purely continuous and covers the non--negative part of the real
axis, see e.g.~\cite{FaNeNe}.

The Stokes operator $S:=\curl^2$, as an operator in $\Ls$,
coincides with the reduction of $(-\Delta)$ to $\Ls$, see
e.g.~\cite[p.~138]{So}. The domain of $S$ is the space
$\bfW^{2,2}(\R^3)\cap\Ls$. Operator $S$ is positive, and its
spectrum is continuous and covers the interval $[0,\infty)$ on the
real axis, see \cite{FaNe} or \cite{FaNeNe}. The power $S^{1/4}$
of operator $S$ satisfies the Sobolev--type inequality
$\|\bfu\|_{3;\, \R^3}\leq \cn01\, \|S^{1/4}\bfu\|_{2;\, \R^3}$ for
$\bfu\in D(S^{1/4})$, see \cite[p.~141]{So}.

\begin{lemma}
\label{L1}
Operator $\curl$, with the domain $D(\curl):=\Ws$, is
self--adjoint in $\Ls$. Its spectrum is continuous and coincides
with the whole real axis.
\end{lemma}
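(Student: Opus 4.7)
The plan is to diagonalize $\curl$ via the Fourier transform using a ``helicity'' orthonormal basis on each fibre $\bfk^{\perp}$, rendering it unitarily equivalent to the direct sum of multiplication by $|\bfk|$ and by $-|\bfk|$ on $L^2(\R^3)$. First, integration by parts on $\bfC^{\infty}_{0,\sigma}(\R^3)$, combined with its density in $\Ws$ and the identity $\|\curl\,\bfu\|_{2;\,\R^3}=\|\nabla\bfu\|_{2;\,\R^3}$, shows that $\curl$ with domain $\Ws$ is symmetric and closed; self-adjointness and the spectrum will then follow from the explicit spectral representation.

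Under the Fourier transform $\Ls$ is unitarily isomorphic to $\cH:=\{\hat\bfu\in\bfL^2(\R^3;\mathbb{C}^3):\bfk\cdot\hat\bfu(\bfk)=0\text{ a.e.}\}$, and $\curl$ corresponds to pointwise multiplication by the Hermitian symbol $\bfk\mapsto i\bfk\times(\,\cdot\,)$ on the fibre $\bfk^{\perp}\otimes\mathbb{C}$. For $\bfk\ne\bfzero$ I pick a measurable orthonormal frame $(\bfe_1(\bfk),\bfe_2(\bfk))$ of $\bfk^{\perp}$ oriented so that $\bfe_1\times\bfe_2=\bfk/|\bfk|$ (for instance $\bfe_1(\bfk):=\bfk\times\bfa/|\bfk\times\bfa|$ for a fixed unit vector $\bfa$, defined off a set of measure zero, and $\bfe_2:=(\bfk/|\bfk|)\times\bfe_1$), and set $\bfh^{\pm}(\bfk):=(\bfe_1(\bfk)\pm i\,\bfe_2(\bfk))/\sqrt{2}$. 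A short computation using $\bfk\times\bfe_1=|\bfk|\bfe_2$ and $\bfk\times\bfe_2=-|\bfk|\bfe_1$ gives $i\bfk\times\bfh^{\pm}(\bfk)=\pm|\bfk|\,\bfh^{\pm}(\bfk)$, and $\{\bfh^+(\bfk),\bfh^-(\bfk)\}$ is an orthonormal basis of $\bfk^{\perp}\otimes\mathbb{C}$. Writing $\hat\bfu(\bfk)=\alpha^+(\bfk)\bfh^+(\bfk)+\alpha^-(\bfk)\bfh^-(\bfk)$ and composing the Fourier transform with this pointwise change of basis yields a unitary $U:\Ls\to L^2(\R^3)\oplus L^2(\R^3)$ that intertwines $\curl$ with the operator $(\alpha^+,\alpha^-)\mapsto(|\bfk|\alpha^+,-|\bfk|\alpha^-)$.

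The latter is a direct sum of multiplications by the real-valued functions $\pm|\bfk|$ and is therefore self-adjoint on its natural maximal domain, which under $U^{-1}$ corresponds exactly to $\{\bfu\in\Ls:|\bfk|\hat\bfu\in\bfL^2(\R^3)\}=\bfW^{1,2}(\R^3)\cap\Ls=\Ws$; this identification of the domain is the main technical point to verify and it follows from the Fourier characterization of $\bfW^{1,2}(\R^3)$ together with the Helmholtz projection. Finally, the spectrum of multiplication by a continuous real-valued function on $L^2(\R^3)$ is its essential range, so the spectrum of $\curl$ is $[0,\infty)\cup(-\infty,0]=\R$, and since $\pm|\bfk|$ attain no value on a set of positive Lebesgue measure there are no eigenvalues and the spectrum is purely continuous.
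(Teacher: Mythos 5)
Your proposal is correct, but it proves the lemma by a genuinely different route than the paper. The paper argues abstractly: symmetry by integration by parts; self-adjointness by showing $D(\curl^*)\subset\Ws$ (approximating $\bfu\in D(\curl^*)$ by a sequence in $\Ws$ and using that $\curl$ maps $\Ls$ continuously into $\Wsdual$ to identify the weak limit of $\curl\,\bfu_n$ with $\curl\,\bfu$); absence of eigenvalues by noting that an eigenvalue $\lambda$ of $\curl$ would force $\lambda^2$ to be an eigenvalue of the Stokes operator $S=\curl^2$, which is excluded; and finally $\Spe_c(\curl)=\R$ by a dilation argument on Weyl sequences, $\bfu_n^{\xi}(\bfx)=\xi^{3/2}\bfu_n(\xi\bfx)$. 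You instead diagonalize $\curl$ explicitly via the Fourier transform and the helicity frame $\bfh^{\pm}(\bfk)$, reducing everything to multiplication by $\pm|\bfk|$; your eigenvector computation and the identification of the maximal domain with $\Ws$ via the Fourier characterization of $\bfW^{1,2}(\R^3)$ are both correct. Your approach buys more: it delivers self-adjointness, the domain, the purely continuous spectrum $\R$, and, essentially for free, the paper's Lemma \ref{L2} ($A=|\br\curl\br|=S^{1/2}$, since $\bigl|\pm|\bfk|\bigr|=|\bfk|$ is the symbol of $(-\Delta)^{1/2}$) together with a concrete description of the projections $P^{\pm}$ as the helicity components. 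The price is that it is rigidly tied to the constant-coefficient whole-space setting, whereas the paper's self-adjointness argument is soft and closer to what survives in exterior domains (cf.~the cited work of Picard); note, though, that the paper's scaling step for the spectrum is equally $\R^3$-specific. Two small points you should make explicit: the frame $\bfe_1(\bfk),\bfe_2(\bfk)$ can only be chosen measurably, not continuously, on $\R^3\smallsetminus\{\bfzero\}$ (which is all you need, and your formula with a fixed $\bfa$ does it off a null set); and since $\Ls$ is a real Hilbert space while $\bfh^{\pm}(\bfk)$ are genuinely complex, the unitary $U$ acts on the complexification, so you should either carry out the spectral analysis there and restrict back using the reality condition $\hat\bfu(-\bfk)=\overline{\hat\bfu(\bfk)}$, or remark that spectrum and self-adjointness are unaffected by complexification. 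Neither point is a gap, only a matter of stating the argument carefully.
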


\begin{proof}
Operator $\curl$ maps $\Ws$ into $\Ls$. The symmetry of $\curl$
follows from an easy integration by parts. The symmetry means that
$\curl\subset\curl^*$, where $\curl^*$ is the adjoint operator to
$\curl$. In order to prove that $\curl=\curl^*$, it is sufficient
to show that $D(\curl^*)\subset\Ws$. Thus, let $\bfu\in
D(\curl^*)$. Then there exists $\bfu^*\in\Ls$ such that $(\curl\,
\bfv,\, \bfu)_{2;\, \R^3}=(\bfv,\bfu^*)_{2;\, \R^3}$ for all
$\bfv\in\Ws$. There exists a sequence $\{\bfu_n\}$ in $\Ws$,
converging to $\bfu$ in the norm of $\Ls$. For each $\bfv\in\Ws$,
we have
\begin{displaymath}
(\curl\, \bfv,\bfu)_{2;\, \R^3}\ =\ \lim_{n\to\infty}\, (\curl\,
\bfv,\, \bfu_n)_{2;\, \R^3}\ =\ \lim_{n\to\infty}\, (\bfv,\,
\curl\, \bfu_n)_{2;\, \R^3}.
\end{displaymath}
Thus, $(\bfv,\bfu^*)_{2;\, \R^3}=\lim_{n\to\infty}\, (\bfv,\,
\curl\, \bfu_n)_{2;\, \R^3}$. This holds, due to the density of
$\Ws$ in $\Ls$, for all $\bfv\in\Ls$. Hence the sequence
$\{\curl\, \bfu_n\}$ converges weakly to $\bfu^*$ in $\Ls$.
Furthermore, since $\curl$ maps continuously $\Ls$ to $\Wsdual$,
we also have $\curl\, \bfu_n\to\curl\, \bfu$ in $\Wsdual$. Hence
$\curl\, \bfu=\bfu^*\in\Ls$. This inclusion, together with the
fact that $\bfu\in\Ls$, implies that $\bfu\in\Ws$. We have proven
that operator $\curl$ is self--adjoint in $\Ls$.

The spectrum of $\curl$, which we denote by $\Spe(\curl)$, is a
subset of the real axis. The residual part is empty, because
$\curl$ is self--adjoint. It means that each point
$\lambda\in\Spe(\curl)$ is either an eigenvalue, or it belongs to
$\Spe_c(\curl)$ (the continuous spectrum of $\curl$). If $\lambda$
is an eigenvalue then $\lambda^2$ is an eigenvalue of the Stokes
operator $S$, which is impossible (see e.g.~\cite[Lemma
2.6]{FaNe}). Thus, $\Spe(\curl)=\Spe_c(\curl)$.

Let us finally show that the spectrum covers the whole real axis.
All points of $\Spe_c(\curl)$ are non--isolated, otherwise they
would have been the eigenvalues, see \cite[p.~273]{Ka}. Let
$\lambda\in\Spe_c(\curl)$, $\lambda\not=0$. There exists a
sequence $\{\bfu_n\}$ on the unit sphere in $\Ls$, such that
$\|\curl\, \bfu_n-\lambda\bfu_n\|_{2;\, \R^3}\to 0$. Let
$\xi\in\R$, $\xi\not=0$. Put $\bfu_n^{\xi}(\bfx):=\xi^{3/2}\,
\bfu_n(\xi\bfx)$. Then $\{\bfu_n^{\xi}\}$ is a sequence on the
unit sphere in $\Ls$, satisfying $\|\curl\,
\bfu_n^{\xi}-\xi\br\lambda\bfu_n^{\xi}\|_{2;\, \R^3}\to 0$. It
means that $\xi\lambda$ belongs to $\Spe_c(\curl)$ as well. Thus,
each real number, different from zero, is in $\Spe_c(\curl)$.
Since $\Spe_c(\curl)$ is closed, we obtain the equality
$\Spe_c(\curl)=\R$.
\end{proof}

\medskip
Let us note that a self--adjoint realization of operator $\curl$
in an exterior domain, in a more general framework than in the
space $\Ls$, has been studied in \cite{Pi}.

Let $\{E_{\lambda}\}$ be the spectral resolution of identity,
associated with operator $\curl$. Projection $E_{\lambda}$ is
strongly continuous in dependence on $\lambda$ because
$\Spe(\curl)$ is continuous, see \cite[pp.~353--356]{Ka}. We
denote
\vspace{-8pt}
\begin{equation}
P^-:=E_0=\int_{-\infty}^0\rmd E_{\lambda} \qquad \mbox{and} \qquad
P^+:=I-E_0=\int_0^{\infty}\rmd E_{\lambda}\br.
\label{1.4}
\end{equation}
Operators $P^-$ and $P^+$ are orthogonal projections in $\Ls$ such
that $I=P^-+P^+$ and $O=P^-P^+$. We put $\Lsm:=P^-\Ls$ and
$\Lsp:=P^+\Ls$. Both $\Lsm$ and $\Lsp$ are closed subspaces of
$\Ls$. Operator $\curl$ reduces on each of the spaces $\Lsm$ and
$\Lsp$. It is negative on $\Lsm$ and positive on $\Lsp$. We denote
by $A$ the operator $|\br\curl\br|$, i.e.
\vspace{-8pt}
\begin{equation}
A\ :=\ -\curl\, \bigl|_{\Lsm}+\, \curl\, \bigr|_{\Lsp}\br.
\label{1.5}
\end{equation}

\begin{lemma}
\label{L2}
Operator $A$ is positive, self--adjoint, and $A=S^{1/2}$.
\end{lemma}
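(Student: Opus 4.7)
The plan is to read off both claims directly from the functional calculus for the self--adjoint operator $\curl$ established in Lemma~\ref{L1}, using the spectral resolution $\{E_\lambda\}$ already introduced in~(\ref{1.4}). In effect, $A$ is nothing but $|\curl|$ in the sense of functional calculus, so everything reduces to standard spectral theory.

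The first step is to identify the piecewise definition~(\ref{1.5}) with the spectral integral $\int_\R |\lambda|\, \rmd E_\lambda$. Since $\Lsm = E_0\Ls$ is the spectral subspace corresponding to $(-\infty,0]$, the restriction of $\int_\R |\lambda|\, \rmd E_\lambda$ to $\Lsm$ equals $-\curl|_{\Lsm}$ (because $|\lambda| = -\lambda$ on the support of the spectral measure there), and symmetrically on $\Lsp$ it equals $+\curl|_{\Lsp}$. Self--adjointness and positivity of $A$ then follow at once from the functional calculus applied to the non--negative real--valued Borel function $\lambda \mapsto |\lambda|$, on the natural domain $D(A) = \{\bfu \in \Ls : \int_\R \lambda^2\, \rmd(E_\lambda\bfu,\bfu)_{2;\, \R^3} < \infty\} = \Ws$.

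For the equality $A = S^{1/2}$, I would invoke the composition rule of the functional calculus. By the text preceding Lemma~\ref{L1}, the Stokes operator $S$ coincides with $\curl^2$ reduced to $\Ls$, so in the calculus $S = \int_\R \lambda^2\, \rmd E_\lambda$. The unique positive self--adjoint square root of $S$ is therefore $\int_\R \sqrt{\lambda^2}\, \rmd E_\lambda = \int_\R |\lambda|\, \rmd E_\lambda = A$. Since $A$ is positive and self--adjoint by the previous step, and $A^2 = S$ with matching domain $\Ws$, uniqueness of the positive square root forces $A = S^{1/2}$.

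The only mildly delicate point, and the nearest thing to an obstacle, is the bookkeeping of domains: one has to verify that the piecewise prescription in~(\ref{1.5}) really agrees with the operator produced by the functional calculus, and that the natural domains of $A$, $A^2$ and $S^{1/2}$ coincide. Both reduce to the observation that $\Lsm$ and $\Lsp$ are reducing subspaces for every Borel function of $\curl$, which is automatic from the construction via $\{E_\lambda\}$. No genuine difficulty beyond this is expected.
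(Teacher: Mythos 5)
Your argument is correct and follows essentially the same route as the paper: both rest on the spectral resolution $\{E_{\lambda}\}$ of the self--adjoint operator $\curl$, identify $A$ with $|\br\curl\br|$, and obtain $A=S^{1/2}$ by elementary functional calculus (the paper writes out the spectral family $F_{\lambda}=E_{\lambda}-E_{-\lambda}$ of $A$ and substitutes in the spectral integral, while you invoke the composition rule $\sqrt{\lambda^2}=|\lambda|$ and uniqueness of the positive square root --- the same computation in different clothing). The only point the paper treats more explicitly is the domain identity $D(A^2)=D(S)$, verified there by the norm computation $\|A\bfu\|_{1,2;\,\R^3}^2=\|\curl^2\bfu\|_{2;\,\R^3}^2+\|\curl\,\bfu\|_{2;\,\R^3}^2$, which your appeal to reducing subspaces covers in substance.
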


\begin{proof}
Operator $A$ is self--adjoint and positive in each of the spaces
$\Lsm$ and $\Lsp$, hence it is self--adjoint and positive in $\Ls$
as well. (See also \cite[p.~358]{Ka}.)

The formula $A^2\bfu=S\bfu$ clearly holds for $\bfu\in D(A^2)\cap
D(S)$. Clearly, $D(S)\subset D(A^2)$. We claim that the opposite
inclusion $D(A^2)\subset D(S)$ is also true: \ the domain of $A^2$
is, by definition, the space of all $\bfu\in\Ws$ such that
$A\bfu\in\Ws$. Using the decomposition $\bfu=P^-\bfu+P^+\bfu$ and
the fact that both the operators $A$ and $\curl$ are reduced on
$\Ls^-$ and on $\Ls^+$, one can verify that $\bfu\in D(A^2)$
satisfies
\begin{displaymath}
\|A\bfu\|_{1,2;\, \R^3}^2\ =\ \|\curl\, A\bfu\|_{2;\,
\R^2}^2+\|A\bfu\|_{2;\, \R^3}^2\ =\ \|\curl^2\bfu\|_{2;\,
\R^3}^2+\|\curl\, \bfu\|_{2;\, \R^3}^2\ <\ \infty.
\end{displaymath}
This implies that $\bfu\in\bfW^{2,2}(\R^3)\cap\Ws=D(S)$, hence
$D(A^2)\subset D(S)$. Consequently, $A^2=S$.

The resolution of identity associated with operator $A$ is the
system of projections $F_{\lambda}:=O$ for $\lambda<0$, $\,
F_{\lambda}=E_{\lambda}-E_{-\lambda}$ for $\lambda>0$. The family
of projections $G_{\lambda}:=O$ for $\lambda<0$, $\,
G_{\lambda}:=F_{\sqrt{\lambda}}$ for $\lambda>0$, represents the
resolution of identity associated with the operator $A^2\equiv S$.
Operator $A$ can now be expressed in this way:
\begin{displaymath}
A\ =\ \int_0^{\infty}\lambda\; \rmd F_{\lambda}\ =\
\int_0^{\infty}\sqrt{\zeta}\; \rmd F_{\sqrt{\zeta}}\ =\
\int_0^{\infty}\sqrt{\zeta}\; \rmd G_{\zeta}\ =\ S^{1/2}.
\end{displaymath}
This completes the proof.
\end{proof}

\medskip
Another way, how one can prove the identity $A=S^{1/2}$, is the
application of Theorem 3.35 in \cite{Ka}. However, here one needs
to verify that both the operators $S$ and $A$ are m--accretive.
The identity $A=S^{1/2}$ also follows from \cite[Theorem 4,
p.~144]{BiSo}.

Due to Lemma \ref{L2}, $A^{\alpha}=S^{\alpha/2}$ for $\alpha\geq
0$. Consequently,
\begin{equation}
\|\bfu\|_{3;\, \R^3}\ \leq\ \cc01\, \|A^{1/2}\bfu\|_{2;\, \R^3}
\label{1.6}
\end{equation}
for $\bfu\in D(A^{1/2})$.

Recall that $\bfomega=\curl\, \bfv$. We further denote
$\bfv^-:=P^-\bfv$, $\, \bfv^+:=P^+\bfv$, $\,
\bfomega^-:=P^-\bfomega$ and $\bfomega^+:=P^+\bfomega$. The
components of $\bfv^+$ are denoted by $v_1^+$, $v_2^+$ and
$v_3^+$, the components of functions $\bfv^-$, $\bfomega^-$ and
$\bfomega^+$ are denoted by analogy. Since operator $\curl$
commutes with projections $P^-$ and $P^+$, we have
$\bfomega^-=\curl\, \bfv^-=-A\bfv^-$ and $\bfomega^+=\curl\,
\bfv^+=A\bfv^+$.

As a weak solution of the problem (\ref{1.1})--(\ref{1.3}), $\bfv$
belongs to $L^2(0,T;\, \Ws)\cap L^{\infty}(0,T$; $\Ls)$. We say
that $\bfv$ satisfies (EI) (= the energy inequality) if
\begin{equation}
\|\bfv(t)\|_{2;\, \R^3}^2+2\int_s^t\|\nabla\bfv(\tau)\|_{2;\,
\R^3}^2\; \rmd\tau\ \leq\ \|\bfv(s)\|_{2;\, \R^3}^2 \label{1.7}
\end{equation}
for $s=0$ and all $t\in[0,T)$. We say that $\bfv$ satisfies (SEI)
(= the strong energy inequality) if (\ref{1.7}) holds for
a.a.~$s\in[0,T)$ and all $t\in[s,T)$.

The next two theorems represent the main results of the paper.

\begin{theorem}
\label{T1}
Let $\bfv$ be a weak solution to the problem
(\ref{1.1})--(\ref{1.3}). Assume that at least one of the two
conditions

\begin{list}{}
{\setlength{\topsep 4pt}
\setlength{\itemsep 2pt}
\setlength{\leftmargin 50pt}
\setlength{\rightmargin 0pt}
\setlength{\labelwidth 25pt}}

\item[{\rm (i)} ]
$(-\Delta)^{1/4}\bfomega^+\in\bfL^2(Q_T)$,

\item[{\rm (ii)} ]
$(-\Delta)^{3/4}\omega_3^+\in L^2(Q_T)$

\end{list}
and at least one of the two conditions

\begin{list}{}
{\setlength{\topsep 4pt}
\setlength{\itemsep 2pt}
\setlength{\leftmargin 50pt}
\setlength{\rightmargin 0pt}
\setlength{\labelwidth 25pt}}

\item[{\rm (a)} ]
$\bfv_0\in\Ls$ and $\bfv$ satisfies (SEI),

\item[{\rm (b)} ]
$\bfv_0\in D(A^{1/2})$ and $\bfv$ satisfies (EI)

\end{list}

\noindent
hold. Then the norm $\|A^{1/2}\bfv\|_{2;\, \R^3}$ is bounded in
each time interval $(\vartheta,T)$, where $0<\vartheta<T$. (If
condition (b) holds then $\|A^{1/2}\bfv\|_{2;\, \R^3}$ is even
bounded on the whole interval $(0,T)$.) Consequently, solution
$\bfv$ has no singular points in $Q_T$.
\end{theorem}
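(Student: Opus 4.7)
The plan is to derive a closed differential inequality for $\|A^{1/2}\bfv(t)\|_{2;\,\R^3}^2$, apply Gronwall, and conclude via a Serrin-type $L^\infty L^3$ bound. First I would reduce case (a) to case (b): under (a), the strong energy inequality together with $\bfv\in L^2(0,T;\Ws)$ gives $\bfv(\tau)\in\Ws\subset D(A^{1/2})$ for a.e.\ $\tau\in(0,\vartheta)$, so one restarts the solution at such a $\tau$ and is reduced to the setting $\bfv_0\in D(A^{1/2})$ with (EI). Next, via Galerkin approximation, I would establish the identity
\begin{equation*}
\frac{1}{2}\frac{d}{dt}\|A^{1/2}\bfv\|_{2;\,\R^3}^2+\|A^{3/2}\bfv\|_{2;\,\R^3}^2\ =\ -\bigl(\bfv\cdot\nabla\bfv,\,A\bfv\bigr)_{2;\,\R^3},
\end{equation*}
using $A=S^{1/2}$ from Lemma~\ref{L2}, the spectral calculus, and that $A\bfv\in\Ls$ makes Leray's projector act trivially on the test function.

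The heart of the proof is the estimate of the trilinear form on the right. Writing $A\bfv=\bfomega^+-\bfomega^-$ from (\ref{1.5}) and $\bfv=\bfv^++\bfv^-$, and noting that $\curl$ commutes with $P^\pm$ so that $\curl\bfv^\pm=\bfomega^\pm$, one expands it into eight trilinear contributions. The two self-interaction terms $(\bfv^\pm\cdot\nabla\bfv^\pm,\bfomega^\pm)_{2;\,\R^3}$ vanish thanks to the identity $(\bfw\cdot\nabla\bfw,\curl\bfw)_{2;\,\R^3}=0$ valid for any divergence-free $\bfw$ (a consequence of $\bfw\cdot\nabla\bfw=\nabla(|\bfw|^2/2)-\bfw\times\curl\bfw$, the fact that the divergence of a curl vanishes, and the pointwise orthogonality $(\bfw\times\curl\bfw)\cdot\curl\bfw\equiv 0$). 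The remaining six cross terms each carry a factor from the $+$-part, so under hypothesis (i) they can be estimated by combining (\ref{1.6}) applied to $\bfomega^+$, the embedding $\|\bfv\|_{6;\,\R^3}\le C\|A\bfv\|_{2;\,\R^3}$ (noting $\|A\bfv\|_{2;\,\R^3}=\|\nabla\bfv\|_{2;\,\R^3}$ since $\bfomega^\pm$ are $L^2$-orthogonal), and the spectral interpolation $\|A\bfv\|_{2;\,\R^3}^2\le\|A^{1/2}\bfv\|_{2;\,\R^3}\,\|A^{3/2}\bfv\|_{2;\,\R^3}$. H\"older and Young inequalities then yield a bound of the form $\epsilon\|A^{3/2}\bfv\|_{2;\,\R^3}^2+\phi(t)\|A^{1/2}\bfv\|_{2;\,\R^3}^2+\psi(t)$ with $\phi,\psi\in L^1(0,T)$, and Gronwall closes the argument.

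The main obstacle is case (ii), where only the single component $\omega_3^+$ is controlled, but at the higher regularity $(-\Delta)^{3/4}\omega_3^+\in L^2(Q_T)$. To reduce (ii) to (i), I would reconstruct $\omega_1^+,\omega_2^+$ from $\omega_3^+$ by exploiting two structural constraints: the divergence-free identity $\partial_1\omega_1^++\partial_2\omega_2^+=-\partial_3\omega_3^+$ (since $\bfomega^+=\curl\bfv^+$), and the algebraic relation among the components of $\bfomega^+$ imposed by $\bfv^+$ lying in the positive spectral subspace of $\curl$ (equivalently, $\bfomega^+=A\bfv^+$ with $A=S^{1/2}$). Together these should let one express $\omega_1^+,\omega_2^+$ in terms of $\omega_3^+$ at the cost of one derivative, which is exactly what shifts the required regularity from $(-\Delta)^{1/4}$ in (i) to $(-\Delta)^{3/4}$ in (ii). This step, involving the nonlocal spectral projection $P^+$, is what I expect to be the delicate part of the argument. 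Once achieved, case (ii) is reduced to the estimates of case (i); Gronwall yields boundedness of $\|A^{1/2}\bfv\|_{2;\,\R^3}$ on $(\vartheta,T)$, (\ref{1.6}) gives $\bfv\in L^\infty(\vartheta,T;\bfL^3(\R^3))$, and a standard Serrin-type regularity criterion finishes the proof (with $\vartheta\to 0^+$ in case (b)).
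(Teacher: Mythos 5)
Your case (i) skeleton --- a differential inequality for $\|A^{1/2}\bfv\|_{2;\,\R^3}^2$ closed by Gronwall --- matches the paper's, but your term-by-term estimate of the convective form does not go through as stated. In the eight-fold expansion of $(\bfv\cdot\nabla\bfv,\,A\bfv)$ the cross term $(\bfv^+\cdot\nabla\bfv^-,\,\bfomega^-)$ carries its only ``$+$'' factor on the \emph{undifferentiated} velocity $\bfv^+$; any H\"older--Sobolev splitting then yields at best quantities like $\|A^{1/2}\bfv\|_{2;\,\R^3}\,\|A^{3/2}\bfv\|_{2;\,\R^3}^2$, with no factor of $\|A^{1/2}\bfomega^+\|_{2;\,\R^3}$ and nothing absorbable or linearly Gronwall-compatible. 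That term is harmless only when paired with $(\bfv^-\cdot\nabla\bfv^+,\,\bfomega^-)$ through the identity $\bfu\cdot\nabla\bfw+\bfw\cdot\nabla\bfu=\nabla(\bfu\cdot\bfw)-\bfu\times\curl\bfw-\bfw\times\curl\bfu$ and the pointwise orthogonality $(\bfv^+\times\bfomega^-)\cdot\bfomega^-=0$ --- which is precisely why the paper works with the rotational form (\ref{1.1a}) and the pointwise identity $[\bfomega\times\bfv]\cdot A\bfv=-2\br[\bfomega^+\times\bfv]\cdot\bfomega^-$, so that a single term with the correct distribution of derivatives survives and (\ref{2.2}) follows from one application of H\"older, (\ref{1.6}) and Young. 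This part of your argument is repairable; also note that the paper handles hypothesis (a) not by restarting at a good time, but via Leray's structure theorem, showing no $b_{\gamma}$ can be an epoch of irregularity.

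The decisive gap is case (ii). You propose to reconstruct $\omega_1^+,\omega_2^+$ from $\omega_3^+$ using $\partial_1\omega_1^++\partial_2\omega_2^+=-\partial_3\omega_3^+$ together with membership of $\bfomega^+$ in the positive spectral subspace, but this system is underdetermined: for each fixed $x_3$ the general solution of the divergence constraint is $(\omega_1^+,\omega_2^+)=\nabla_{2D}^{}y+\nabla_{2D}^{\perp}z$ with $\Delta_{2D}^{}y=-\partial_3\omega_3^+$ and $z$ an arbitrary stream function, and the spectral condition $\bfomega^+=A\bfv^+$ is a global, nonlocal constraint on the full vector field that does not determine $z$ from $\omega_3^+$. (Were the first two components recoverable from the third at the cost of one derivative, hypothesis (ii) would collapse into (i) and the one-component statement would lose its content.) What the paper actually proves is not a reconstruction but the quadratic-form estimate (\ref{2.14}), $\|A^{1/2}\bfomega^+\|_{2;\,\R^3}^2\leq c\,\|\bfomega^+\|_{2;\,\R^3}^2+c\,\|(-\Delta)^{3/4}\omega_3^+\|_{2;\,\R^3}^2$. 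It localizes $\bfomega^+$ into divergence-free pieces $\bfomega^{mn}$ supported in cylinders $C^{mn}$ (a partition of unity corrected by Bogovskij operators $\bfV^{mn}$), performs exactly the splitting above on $(-\Delta)_{mn}^{1/4}\bfomega^{kl}_{2D}$ via the auxiliary functions $y^{kl}_{mn}$ and $z^{kl}_{mn}$ of (\ref{2.7}) and (\ref{2.15}), and then exploits the structure of the pairing $(\curl\,\bfomega^{mn},\bfomega^{kl})_{2;\,C^{mn}}$: the stream-function part couples only to $(-\Delta)_{mn}^{3/4}\omega^{mn}_3$, the potential part is controlled by $\partial_3\omega^{kl}_3$, and the remaining $\delta\,\|\bfomega^+\|_{1/2,2;\,\R^3}^2$ is absorbed into the left-hand side. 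This absorption argument on the quadratic form, rather than any componentwise reconstruction, is the heart of the theorem, and your proposal does not supply it.
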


The proof of existence of a weak solution to
(\ref{1.1})--(\ref{1.3}), satisfying (EI) and (SEI) under the
assumption that $\bfv_0\in\Ls$, is given in \cite{Le}. Thus,
conditions (a) and (b) do not cause any remarkable restrictions.

The next theorem is a generalization of Theorem \ref{T1}. Before
we formulate it, we introduce some notation. Suppose that $a=a(t)$
is a function in the interval $(0,T)$ with values in
$[-\infty,\infty)$. We denote by $a_+(t)$ the positive part and by
$a_-(t)$ the negative part of $a(t)$. We put
\begin{equation}
P^+_{a(t)}\ :=\ I-E_{a(t)}\ =\ \int_{a(t)}^{\infty}\rmd
E_{\lambda}\br,\label{G1}
\end{equation}
$\bfv^+_a(t):=P^+_{a(t)}\bfv(t)$, and
$\bfomega^+_a(t):=P^+_{a(t)}\bfomega(t)=\curl\, \bfv^+_a(t)$. The
third component of function $\bfomega^+_a$ is denoted by
$\omega^+_{a3}$.

\begin{theorem}
\label{T2}
Let $\bfv$ be a weak solution to the problem
(\ref{1.1})--(\ref{1.3}). Assume that at least one of the two
conditions

\begin{list}{}
{\setlength{\topsep 4pt}
\setlength{\itemsep 2pt}
\setlength{\leftmargin 50pt}
\setlength{\rightmargin 0pt}
\setlength{\labelwidth 25pt}}

\item[{\rm (iii)} ]
$a_+\in L^3(0,T)$ \ and \
$(-\Delta)^{1/4}\bfomega^+_a\in\bfL^2(Q_T)$,

\item[{\rm (iv)} ]
$a_+\in L^3(0,T)$, \ $a_-\in L^5(0,T)$ \ and \
$(-\Delta)^{3/4}\omega^+_{a3}\in L^2(Q_T)$

\end{list}

\noindent
and at least one of conditions (a) and (b) are fulfilled. Then the
statements of Theorem \ref{T1} hold.
\end{theorem}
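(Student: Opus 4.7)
My plan is to follow the same strategy as the proof of Theorem~\ref{T1}, the added difficulty being to accommodate the time-dependent spectral shift $a(t)$. The target quantity is $\|A^{1/2}\bfv(t)\|_{2;\,\R^3}$: once it is bounded uniformly on $(\vartheta,T)$ (or on $(0,T)$ under~(b)), the Sobolev--type inequality (\ref{1.6}) puts $\bfv$ into $L^{\infty}(\vartheta,T;\,\bfL^{3}(\R^3))$ and Serrin's criterion yields the regularity conclusion.

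The bridge to Theorem~\ref{T1} is the spectral identity $\bfomega^{+}-\bfomega^{+}_{a}=(E_{a(t)}-E_{0})\bfomega$, whose support lies in the spectral interval with endpoints $0$ and $a(t)$. Consequently, for every $s\ge 0$,
\[
\|A^{s}(\bfomega^{+}-\bfomega^{+}_{a})\|_{2;\,\R^3}\ \le\ \max\{a_{+}(t),a_{-}(t)\}^{s}\,\|\bfomega(t)\|_{2;\,\R^3},
\]
and an analogous bound holds component-wise for $\omega_{3}^{+}-\omega^{+}_{a3}$. Writing $\bfomega^{+}=\bfomega^{+}_{a}+R(t)$ (respectively $\omega_{3}^{+}=\omega^{+}_{a3}+R_{3}(t)$), the correction $R$ is a smooth spectral remainder controlled by $|a(t)|^{s}\|\nabla\bfv\|_{2;\,\R^3}$.

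Starting from the energy identity that arises in Theorem~\ref{T1},
\[
\tfrac{1}{2}\tfrac{\rmd}{\rmd t}\|A^{1/2}\bfv\|_{2;\,\R^3}^{2}+\|A^{3/2}\bfv\|_{2;\,\R^3}^{2}\ =\ -(\bfv\cdot\nabla\bfv,\,A\bfv)_{2;\,\R^3},
\]
I would estimate the convective term by the same H\"older--interpolation chain as in Theorem~\ref{T1}, but with $\bfomega^{+}$ (respectively $\omega_{3}^{+}$) expanded through $\bfomega^{+}=\bfomega^{+}_{a}+R$. This produces two kinds of contributions on the right-hand side: the ``main'' one, treated verbatim as in Theorem~\ref{T1} using (iii) or~(iv), and the ``correction'' one involving $R$. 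The latter reduces, after Young's inequality and absorption into the dissipation $\|A^{3/2}\bfv\|_{2;\,\R^3}^{2}$, to integrals of the form $\int_{0}^{T}|a_{\pm}(t)|^{p}\,\|\nabla\bfv\|_{2;\,\R^3}^{q}\,\|A^{1/2}\bfv\|_{2;\,\R^3}^{r}\,\rmd t$. A H\"older check then shows that $a_{+}\in L^{3}(0,T)$ suffices in case~(iii), whereas case~(iv)---built around the higher spectral weight $(-\Delta)^{3/4}$ applied to a single component---demands in addition $a_{-}\in L^{5}(0,T)$.

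The hardest step will be the one-component situation~(iv): the corresponding argument in Theorem~\ref{T1} reconstructs the full vector $\bfomega^{+}$ from the single component $\omega_{3}^{+}$ through the structure of $\Ps$ and $\curl$, and the correction $R_{3}$ has to be carried through this reconstruction without spoiling the integrability count; this is precisely what forces the stronger $L^{5}$-condition on $a_{-}$. Once the resulting differential inequality for $\|A^{1/2}\bfv\|_{2;\,\R^3}^{2}$ is in hand, Gronwall closes the estimate: under~(b) the starting value $\|A^{1/2}\bfv(0)\|_{2;\,\R^3}$ is finite by hypothesis, while under~(a) we begin at an arbitrary $\vartheta\in(0,T)$ at which $\|A^{1/2}\bfv(\vartheta)\|_{2;\,\R^3}<\infty$, guaranteed for a.a.\ $\vartheta$ by the strong energy inequality together with $\bfv\in L^{2}(0,T;\,\Ws)$.
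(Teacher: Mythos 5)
Your overall strategy is the paper's: reduce to the differential inequality of Theorem \ref{T1} and control the spectral gap between $\bfomega^+$ and $\bfomega^+_a$, which is indeed the spectral segment of $\bfomega$ between $0$ and $a(t)$. But the quantitative heart of the argument is missing, and the bound you do state is too weak to close the integrability count. You estimate the remainder by $\|A^{s}(\bfomega^{+}-\bfomega^{+}_{a})\|_{2;\,\R^3}\le |a(t)|^{s}\,\|\bfomega(t)\|_{2;\,\R^3}$. With $s=\tfrac12$ this gives a correction term of size $a_+(t)\,\|\nabla\bfv(t)\|_{2;\,\R^3}^2$ in the Gronwall coefficient; since a weak solution only has $\|\nabla\bfv\|_{2;\,\R^3}^2\in L^1(0,T)$, no H\"older exponent turns $a_+\in L^3(0,T)$ into integrability of this product. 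The paper's point is to trade two extra powers of $\lambda$ against the \emph{velocity} rather than the vorticity: on the segment between $0$ and $a$ one has $\rmd(E_{\lambda}\bfomega,\bfomega)_{2;\,\R^3}=\lambda^{2}\,\rmd(E_{\lambda}\bfv,\bfv)_{2;\,\R^3}$, whence $\|A^{1/2}\bfomega_{(0,a)}\|_{2;\,\R^3}^{2}\le a_{+}^{3}\,\|\bfv\|_{2;\,\R^3}^{2}$ and, for the $(-\Delta)^{3/4}$ term, $\|A^{3/2}\bfomega_{(a,0)}\|_{2;\,\R^3}^{2}\le |a_{-}|^{5}\,\|\bfv\|_{2;\,\R^3}^{2}$, with $\|\bfv\|_{2;\,\R^3}^{2}$ uniformly bounded by the energy inequality. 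This is exactly where the exponents $3$ and $5$ come from; your proposal never produces them.

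A second, related defect concerns case (iv). If you always expand $\bfomega^{+}=\bfomega^{+}_{a}+R$ and push the correction through the $(-\Delta)^{3/4}$ estimate, then for $a\ge 0$ the remainder contributes $\|A^{3/2}\bfomega_{(0,a)}\|_{2;\,\R^3}^{2}\le a_{+}^{5}\,\|\bfv\|_{2;\,\R^3}^{2}$, which would force $a_{+}\in L^{5}(0,T)$ — stronger than the hypothesis. The paper avoids this by treating the two signs asymmetrically: for $a\ge 0$ the localization estimate (\ref{2.14}) is applied directly to $\bfomega^{+}_{a}$ (legitimate because $\bfomega^{+}_{a}$ then lies in the positive spectral subspace, so $(A\bfomega^{+}_{a},\bfomega^{+}_{a})_{2;\,\R^3}=(\curl\,\bfomega^{+}_{a},\bfomega^{+}_{a})_{2;\,\R^3}$), and only the low--order quantity $\|A^{1/2}\bfomega_{(0,a)}\|_{2;\,\R^3}^{2}\le a_+^3\,\|\bfv\|_{2;\,\R^3}^2$ remains; for $a<0$ that identity fails, so one applies (\ref{2.14}) to $\bfomega^{+}$ and shifts only the third component, paying $|a_{-}|^{5}$. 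Your plan does not distinguish these cases. Finally, the wrap--up should go through the Theor\`eme de Structure and epochs of irregularity as in Section \ref{S2} (the differential inequality is only justified on the regularity intervals, and boundedness of $\|A^{1/2}\bfv\|_{2;\,\R^3}$ directly excludes an epoch of irregularity); invoking Serrin's criterion at the endpoint $L^{\infty}(0,T;\,\bfL^{3}(\R^3))$ is not covered by the classical Serrin class.
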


\noindent
If $a\equiv 0$ then Theorems \ref{T1} and \ref{T2} coincide.
Theorem \ref{T1} is proven in Section \ref{S2}, the proof of
Theorem \ref{T2} is the contents of Section \ref{S4}. Several
remarks are postponed to Section \ref{S3}.

\section{Proof of Theorem \ref{T1}}
\label{S2}

Throughout this section, we denote by $c$ a generic constant,
which is always independent of solution $\bfv$. Numbered constants
have the same value (also independent of $\bfv$) in the whole
paper.

Suppose that solution $\bfv$ satisfies condition (a). Then it also
satisfies the assumptions of the so called {\it Theor\`eme de
Structure,} see \cite[p.~57]{Ga2}. (The theorem was in fact for
the first time formulated by Leray in \cite[pp.~244, 245]{Le}.)
Due to this theorem, there exists a system
$\{(a_{\gamma},b_{\gamma})\}_{\gamma\in\Gamma}$ of disjoint open
intervals in $(0,T)$ such that the measure of
$(0,T)\smallsetminus\cup_{\gamma\in\Gamma}(a_{\gamma},b_{\gamma})$
is zero, $\bfv$ is of the class $C^{\infty}$ on
$\R^3\times(a_{\gamma},b_{\gamma})$ for all $\gamma\in\Gamma$, and
$\|A^{1/2}\bfv\|_{2;\, \R^3}$ is locally bounded in each of the
intervals $(a_{\gamma},b_{\gamma})$. If a singularity develops at
the time instant $b_{\gamma}$ then $\|A^{1/2}\bfv(t)\|_{2;\,
\R^3}\to\infty$ for $t\to b_{\gamma}-$. In this case, we call
$b_{\gamma}$ the {\it epoch of irregularity.} In order to prove
that solution $\bfv$ has no singular points in $Q_T$, it is
sufficient to show that there are no epochs of irregularity in
$(0,T)$. Assume, therefore, that $t\in(a_{\gamma},b_{\gamma})$ for
some fixed $\gamma\in\Gamma$.

The Navier--Stokes equation (\ref{1.1}) (with $\nu=1$) can also be
written in the equivalent form
\begin{equation}
\partial_t\bfv+\bfomega\times\bfv+\curl^2\bfv\ =\
-\nabla\bigl(p+{\textstyle\frac{1}{2}}\br|\bfv|^2\bigr).
\label{1.1a}
\end{equation}
Multiplying this equation by $A\bfv$, and integrating in $\R^3$,
we obtain
\begin{equation}
\frac{\rmd}{\rmd\br t}\, \frac{1}{2}\, \|A^{1/2}\bfv\|_{2;\,
\R^3}^2-2\br\bigl(\bfomega^+\times\bfv,\, \bfomega^-\bigr)_{2;\,
\R^3}+\|A^{3/2}\bfv\|_{2;\, \R^3}^2\ =\ 0. \label{2.17}
\end{equation}
We have used the identities
\begin{align*}
[\bfomega\times\bfv]\cdot A\bfv\, &=\, [(\bfomega^++\bfomega^-)
\times\bfv]\cdot(\bfomega^+-\bfomega^-)\, =\,
-[\bfomega^+\times\bfv]\cdot\bfomega^-+[\bfomega^-\times\bfv]\cdot
\bfomega^+ \\ \noalign{\vskip 4pt}
&=\, -2\br[\bfomega^+\times\bfv]\cdot\bfomega^-.
\end{align*}
The scalar product $(\bfomega^+\times\bfv,\, \bfomega^-)_{2;\,
\R^3}$ can be estimated: \phantom{$\cn02$}
\begin{align}
\bigl|\bigl(\bfomega^+ & \times\bfv,\, \bfomega^-\bigr)_{2;\,
\R^3} \bigr|\ \leq\ \|\bfomega^+\|_{3;\, \R^3}\, \|\bfv\|_{3;\,
\R^3}\, \|\bfomega^-\|_{3;\, \R^3}\ \nonumber \\
\noalign{\vskip 2pt}
& \leq\ \cc01^3\, \|A^{1/2}\bfomega^+\|_{2;\, \R^3}\,
\|A^{1/2}\bfv\|_{2;\,
\R^3}\, \|A^{1/2}\bfomega^-\|_{2;\, \R^3} \nonumber \\
\noalign{\vskip 2pt}
& \leq\ \frac{1}{4}\, \bigl\|A^{1/2}\bfomega^-\bigr\|_{2;\, \R^3}
^2+\cc01^6\, \|A^{1/2}\bfv\|_{2;\, \R^3}^2\,
\|A^{1/2}\bfomega^+\|_{2;\, \R^3}^2 \nonumber \\ \noalign{\vskip
2pt}
& \leq\ \frac{1}{4}\, \bigl\|A^{3/2}\bfv\bigr\|_{2;\, \R^3}^2+
\cc01^6\, \|A^{1/2}\bfv\|_{2;\, \R^3}^2\,
\|A^{1/2}\bfomega^+\|_{2;\, \R^3}^2\br. \label{2.1}
\end{align}
Equation (\ref{2.17}) and inequalities (\ref{2.1}) yield
\begin{equation}
\frac{\rmd}{\rmd\br t}\ \|A^{1/2} \bfv\|_{2;\,
\R^3}^2+\|A^{3/2}\bfv\|_{2;\, \R^3}^2\ \leq\ 4\cc01^6\,
\|A^{1/2}\bfv\|_{2;\, \R^3}^2\, \|A^{1/2}\bfomega^+\|_{2;\,
\R^3}^2\br. \label{2.2}
\end{equation}

\noindent
{\bf The case of condition (i).} \ If condition (i) of Theorem
\ref{T1} is fulfilled then the term $\|A^{1/2}\bfomega^+\|_{2;\,
\R^3}^2$ on the right hand side of (\ref{2.2}) is in $L^1(0,T)$.
Hence we can choose $\tau\in(a_{\gamma},b_{\gamma})$ and apply
Gronwall's inequality to (\ref{2.2}) on the time interval
$[\tau,b_{\gamma})$. In this way, we show that
$\|A^{1/2}\bfv\|_{2;\, \R^3}$ is bounded on the interval
$[\tau,b_{\gamma})$, which means that $b_{\gamma}$ is not an epoch
of irregularity.

\vspace{4pt} \noindent
{\bf The case of condition (ii).} \ Let us further assume that
condition (ii) of Theorem \ref{T1} holds. This case is much more
subtle and it is considered in the rest of Section \ref{S2}. The
crucial part of the proof is the estimate of
$\|A^{1/2}\bfomega^+\|_{2;\, \R^3}^2$. The next paragraphs head
towards this aim. We derive an estimate at a fixed time instant
$t$, hence we mostly omit for brevity writing $t$ among the
variables of $\bfomega^+$ and other related functions. Recall that
$t$ is supposed to be in the interval $(a_{\gamma},b_{\gamma})$,
where solution $\bfv$ is smooth. The function value $\bfv(t)$ even
belongs to $\bfW^{2,2}(\R^3)$, as follows from \cite[Theorem
6.1]{Ga2}. Hence $\bfomega(t)\in\Ws$ and, consequently,
$\bfomega^+(t)$ also belongs to $\Ws$.

\vspace{4pt} \noindent
{\bf Sets $K^{mn}_{\xi}$, $C^{mn}$ and the partition of function
$\bfomega^+$.} \ In this paragraph, we define sets
$K^{mn}_{\xi}\subset\R^2$, $C^{mn}\subset\R^3$, and we
successively introduce auxiliary functions $\eta^{mn}$,
$\bfV^{mn}$, $y^{kl}_{mn}$ and $z^{kl}_{mn}$ (for $m,n\in\Z$ and
$k\in\{m-1;\, m;\, m+1\}$, $l\in\{n-1;\, n;\, n+1\}$).

Let us say in advance that $K^{mn}_2$ is a square in $\R^2$ with
the sides of length $5$ and $C^{mn}=K^{mn}_2\times\R$. Using the
functions $\eta^{mn}$ and $\bfV^{mn}$, we create a partition of
function $\bfomega^+$ which consists of functions $\bfomega^{mn}$
such that $\supp\br \bfomega^{mn}\subset C^{mn}$. In following
paragraphs, we derive certain estimates of $\bfomega^{mn}$ (based
on estimates of the auxiliary functions $\eta^{mn}$, $\bfV^{mn}$,
$y^{kl}_{mn}$, $z^{kl}_{mn}$ on sets $C^{mn}$), which strongly use
the structure $C^{mn}=K^{mn}_2\times\R$ of sets $C^{mn}$ and the
fact that $K^{mn}_2$ are squares in $\R^2$ with the length of the
sides independent of $m$, $n$. Then, using the expansion
$\bfomega^+=\sum_{m,n\in\Z}\br\bfomega^{mn}$, we derive an
estimate of $A^{1/2}\bfomega^+$ which is needed in (\ref{2.2}).
(See estimate (\ref{2.14}).)

We begin with the definition of sets $K^{mn}_{\xi}\subset\R^2$ and
$C^{mn}\subset\R^3$: \ for $m,n\in\Z$ and
$\xi\in(-\frac{1}{2},\infty)$, we denote $K^{mn}_{\xi}:=
(m-\xi,m+1+\xi)\times (n-\xi,n+1+\xi)$. Further, we put
$C^{mn}:=K^{mn}_2\times\R=(m-2,m+3)\times(n-2,n+3)\times\R\subset\R^3$.
Thus, $K^{mn}_{\xi}$ are squares in $\R^2$, while $C^{mn}$ are
cylinders in $\R^3$.

Let $\epsilon\in(0,\frac{1}{8})$ be fixed. There exists a
partition of unity with these properties: the partition consists
of the system $\{\eta^{mn}\}_{m,n\in\Z}$ of infinitely
differentiable functions of two variables, such that

\vspace{-8pt}
\begin{list}{}
{\setlength{\topsep 0pt}
\setlength{\itemsep 1pt}
\setlength{\leftmargin 45pt}
\setlength{\rightmargin 0pt}
\setlength{\labelwidth 6pt}}

\item[a) ]
$\eta^{mn}=1\, $ in $K^{mn}_{-\epsilon}$, \ \ $\eta^{mn}=0\, $ in
$\R^2\smallsetminus K^{mn}_{\epsilon}$, \ \ $0\leq\eta^{mn}\leq
1\, $ in $\R^2$,

\item[b) ]
$\eta^{m+i,n+j}(x_1,x_2)=\eta^{mn}(x_1+i,x_2+j)\, $ for all $i,\br
j\in\Z$,

\item[c) ]
$\sum_{m,n\in\Z}\, \eta^{mn}=1\, $ in $\R^2$.

\end{list}

\noindent
(Function $\eta^{mn}$ can be e.g.~defined by means of a mollifier
with the kernel supported on $B_{\epsilon}(\bfzero)$, applied to
the characteristic function of the square $K^{mn}_0$.)

We denote by $\nabla_{2D}^{}$ the 2D nabla operator
$(\partial_1,\partial_2)$, and by $\bfomega^+_{2D}$ the 2D vector
field $(\omega^+_1,\omega^+_2)$. Applying successively the
procedure of solving the equation $\nabla_{2D}\cdot\bfu=f$,
especially the so called Bogovskij formula (see e.g.~\cite{BoSo}),
we deduce that there exists a system $\{\bfV^{mn}\}_{m,n\in\Z}$ of
2D vector functions $\bfV^{mn}=(V^{mn}_1,V^{mn}_2)$ defined in
$\R^3$ with the properties

\begin{list}{}
{\setlength{\topsep 2pt}
\setlength{\itemsep 1pt}
\setlength{\leftmargin 45pt}
\setlength{\rightmargin 0pt}
\setlength{\labelwidth 6pt}}

\item[d) ]
$\nabla_{2D}\cdot\bfV^{mn}=-\nabla_{2D}^{}\eta^{mn}
\cdot\bfomega^+_{2D}\, $ in $\R^3$,

\item[e) ]
$\supp\, \bfV^{mn}\subset [K^{mn}_{2\epsilon}\smallsetminus
K^{mn}_{-2\epsilon}]\times\R$,

\item[f) ]
$\sum_{m,n\in\Z}\br\bfV^{mn}=\bfzero\, $ in $\R^3$,

\item[g) ]
$\|\bfV^{mn}\|_{2;\, C^{mn}}+\|\nabla_{2D}\bfV^{mn}\|_{2;\,
C^{mn}}\leq c\, \|\bfomega^+_{2D}\|_{2;\, C^{mn}}$,

\item[h) ]
$\|\partial_3\bfV^{mn}\|_{2;\, C^{mn}}\leq c\,
\|\partial_3\bfomega^+_{2D}\|_{2;\, C^{mn}}$.

\end{list}

\noindent
Constant $c$ is always independent of $m$ and $n$. We can derive
from the last two estimates, by interpolation, that
\vspace{-8pt}
\begin{equation}
\|\bfV^{mn}\|_{1/2,2;\, C^{mn}}\ \leq\ c\,
\|\bfomega^+\|_{1/2,2;\, C^{mn}}\br. \label{2.4}
\end{equation}
For technical reasons, we put $V^{mn}_3:=0$ and we further
consider $\bfV^{mn}$ to be the 3D vector field. Further, we put
\vspace{-8pt}
\begin{displaymath}
\bfomega^{mn}\ :=\ \eta^{mn}\br\bfomega^+-\bfV^{mn}.
\end{displaymath}
The components of $\bfomega^{mn}$ are denoted by $\omega^{mn}_1$,
$\omega^{mn}_2$ and $\omega^{mn}_3$. By analogy with
$\bfomega^+_{2D}$, we also denote
$\bfomega^{mn}_{2D}:=(\omega^{mn}_1,\omega^{mn}_2)$. Function
$\bfomega^{mn}$ is divergence--free in $\R^3$, it equals
$\bfomega^+$ in $K^{mn}_{-2\epsilon}\times\R$, and its support is
a subset of $K^{mn}_{2\epsilon}\times\R$. Moreover, we have
$\bfomega^+=\sum_{m,n\in\Z}\br\bfomega^{mn}$.

The term $\|A^{1/2}\bfomega^+\|_{2;\, \R^3}^2$ can now be written
in this form:
\begin{align}
\|A^{1/2}\bfomega^+\|_{2;\, \R^3}^2\, & =\,
(A\bfomega^+,\bfomega^+)_{2;\, \R^3}\, =\, (\curl\,
\bfomega^+,\bfomega^+)_{2;\, \R^3}\, =
\sum_{m,n\in\Z}\br\sum_{k,\br l\in\Z}\, (\curl\,
\bfomega^{mn},\bfomega^{kl})_{2;\, \R^3}\  \nonumber \\
\noalign{\vskip 0pt}
& =\ \sum_{m,n\in\Z}\ \ \sum_{\substack{k\in\{m-1;\, m;\, m+1\} \\
l\in\{n-1;\, n;\, n+1\}}}\, (\curl\,
\bfomega^{mn},\bfomega^{kl})_{2;\, C^{mn}}\br. \label{2.3}
\end{align}
The last equality holds because the supports of $\bfomega^{mn}$
and $\bfomega^{kl}$ have a non--empty intersection only if
$k\in\{m-1;\, m;\, m+1\}$ and $l\in\{n-1;\, n;\, n+1\}$. In this
case, both the supports are subsets of $C^{mn}$.

\vspace{4pt} \noindent
{\bf Operator $(-\Delta)_{mn}$.} \ We denote by $(-\Delta)_{mn}$
the operator $-\Delta$ with the domain
$D((-\Delta)_{mn}):=W^{2,2}(C^{mn})\cap W^{1,2}_0(C^{mn})$.
Operator $(-\Delta)_{mn}$ is positive and self--adjoint in
$L^2(C^{mn})$, with a bounded inverse. The powers of
$(-\Delta)_{mn}$, with positive as well as negative exponents, can
be defined in the usual way by means of the corresponding spectral
expansion, see e.g.~\cite{Ka}.

\vspace{4pt} \noindent
{\bf Auxiliary functions $y^{kl}_{mn}$.} \ We denote by
$y^{kl}_{mn}$ the solution of the 2D Neumann problem
\begin{equation}
\Delta_{2D}^{}y^{kl}_{mn}=
-(-\Delta)_{mn}^{1/4}\br(\partial_3\omega^{kl}_3)\ \ \mbox{in}\
K^{mn}_2, \qquad \frac{\partial y^{kl}_{mn}}{\partial\bfn}=0\ \
\mbox{on}\ \partial K^{mn}_2 \label{2.7}
\end{equation}
for $m,\, n\in\Z$, $k\in\{m-1;\, m;\, m+1\}$ and $l\in\{n-1;\,
n;\, n+1\}$. Function $y^{kl}_{mn}$ satisfies the estimate
\vspace{-8pt}
\begin{equation}
\|\nabla_{2D}^{}y^{kl}_{mn}\|_{2;\,
K^{mn}_2}^2+\|\nabla_{2D}^2y^{kl}_{mn}\|_{2;\, K^{mn}_2}^2\ \leq\
c\, \|(-\Delta)_{mn}^{1/4}\br(\partial_3\omega^{kl}_3)\|_{2;\,
K^{mn}_2}^2, \label{2.16}
\end{equation}
where $c$ is independent of $m,\, n,\, k$ and $l$. Since
$\partial_3\omega^{kl}_3$ is a function of three variables $x_1,\,
x_2,\, x_3$, function $y^{kl}_{mn}$ naturally depends not only on
$x_1,\, x_2$, but also on $x_3$. Integrating the last estimate
with respect to $x_3$, we obtain
\begin{equation}
\|\nabla_{2D}^2y^{kl}_{mn}\|_{2;\,
C^{mn}}^2+\|\nabla_{2D}^{}y^{kl}_{mn}\|_{2;\, C^{mn}}^2\ \leq\ c\,
\|(-\Delta)_{mn}^{1/4}\br\partial_3\omega^{kl}_3\|_{2;\,
C^{mn}}^2\br. \label{2.13}
\end{equation}

\noindent
{\bf Auxiliary functions $z^{kl}_{mn}$.} \ We define function
$z^{kl}_{mn}$ to be the solution of the equation
\begin{equation}
\nabla_{2D}^{\perp}\br z^{kl}_{mn}\ =\
(-\Delta)_{mn}^{1/4}\bfomega^{kl}_{2D}-\nabla_{2D}^{} y^{kl}_{mn}
\label{2.15}
\end{equation}
in $K^{mn}_2$. (Here, we denote by $\nabla^{\perp}_{2D}$ the
operator $(-\partial_2,\partial_1)$.) The solution exists because
\begin{displaymath}
\nabla_{2D}^{}\cdot\bigl[(-\Delta)_{mn}^{1/4}\br\bfomega^{kl}_{2D}-
\nabla_{2D}^{}y^{kl}_{mn}\bigr]\ =\ 0.
\end{displaymath}
Solution $z^{kl}_{mn}$ depends not only on $x_1$, $x_2$, but also
on $x_3$ because the right hand side of equation (\ref{2.15})
depends on $x_3$ as well. Function $z^{kl}_{mn}$ is the so called
{\it stream function} of the 2D vector field
$(-\Delta)_{mn}^{1/4}\br\bfomega^{kl}_{2D}-
\nabla_{2D}^{}y^{kl}_{mn}$. For each fixed $x_3\in\R$,
$z^{kl}_{mn}$ satisfies the estimate
\begin{equation}
\|\nabla_{2D}^{}z^{kl}_{mn}\|_{2;\, K^{mn}_2}\ \leq\ c\, \bigl(
\|(-\Delta)_{mn}^{1/4}\br\bfomega^{kl}_{2D}\|_{2;\,
K^{mn}_2}+\|\nabla_{2D}^{} y^{kl}_{mn}\|_{2;\, K^{mn}_2}\bigr).
\label{2.8}
\end{equation}
Moreover, $z^{kl}_{mn}$ is constant on $\partial C^{mn}$
($=\partial K^{mn}_2\times\R$). This follows from the identities
\begin{displaymath}
\nabla_{2D}^{\perp}z^{kl}_{mn}\cdot\bfn\ =\
(-\Delta)_{mn}^{1/4}\bfomega^{kl}_{2D}\cdot\bfn-
\nabla_{2D}^{}y^{kl}_{mn}\cdot\bfn\ =\ 0,
\end{displaymath}
valid on $\partial C^{mn}$. Indeed, the second term
$\nabla_{2D}^{}y^{kl}_{mn}\cdot\bfn$ equals zero on $\partial
C^{mn}$ by definition of $y^{kl}_{mn}$. The first term
$(-\Delta)_{mn}^{1/4}\br\bfomega^{kl}$ is zero on $\partial
C^{mn}$ because $\bfomega^{mn}\in D((-\Delta)_{mn})$, hence
$(-\Delta)_{mn}^{1/4}\bfomega^{mn}\in D((-\Delta)_{mn}^{3/4})$,
and functions from $D((-\Delta)_{mn}^{3/4})$ have the trace on
$\partial C^{mn}$ equal to zero. (This can be easily verified
because $D((-\Delta)_{mn}^{1/2})=W^{1,2}_0(C^{mn})$, which implies
that $D((-\Delta)_{mn}^{3/4})$ is the interpolation space between
$D((-\Delta)_{mn})\equiv W^{2,2}(C^{mn})\cap W^{1,2}_0(C^{mn})$
and $W^{1,2}_0(C^{mn})$, and both the spaces contain only
functions whose traces are equal to zero on $\partial C^{mn}$.)
Function $z^{kl}_{mn}$ is unique up to an additive function of $t$
and $x_3$. We can now choose this function so that $z^{kl}_{mn}=0$
on $\partial C^{mn}$. This choice, together with (\ref{2.8}) and
(\ref{2.13}), implies that
\begin{align}
\|z^{kl}_{mn}\|_{2;\, C^{mn}}\ &\leq\ c\, \bigl(
\|(-\Delta)_{mn}^{1/4}\bfomega^{kl}_{2D}\|_{2;\,
C^{mn}}+\|\nabla_{2D}^{} y^{kl}_{mn}\|_{2;\, C^{mn}}\bigr)
\nonumber \\ \noalign{\vskip 2pt}
& \leq\ c\, \bigl( \|(-\Delta)_{mn}^{1/4}\bfomega^{kl}\|_{2;\,
C^{mn}}+ \|(-\Delta)_{mn}^{1/4}\br(\partial_3
\omega^{kl}_3)\|_{2;\, C^{mn}}\bigr). \label{2.5}
\end{align}

\noindent
{\bf The estimate of $(\curl\, \bfomega^{mn},\bfomega^{kl})_{2;\,
C^{mn}}$.} \ We denote
$\bfw^{mn}\equiv(w^{mn}_1,w^{mn}_2,w^{mn}_3):=\curl\,
\bfomega^{mn}$ and $\bfw^{mn}_{2D}:=(w^{mn}_1,w^{mn}_2)$. We
always assume that $k\in\{m-1;\, m;\, m+1\}$ and $l\in\{n-1;\,
n;\, n+1\}$. Due to the definition of functions $y^{kl}_{mn}$ and
$z^{kl}_{mn}$, function $(-\Delta)_{mn}^{1/4}\bfomega^{kl}$ has
the form
\begin{displaymath}
(-\Delta)_{mn}^{1/4}\bfomega^{kl}\ =\ \left( \begin{array}{c}
\partial_1y^{kl}_{mn} \\ [2pt] \partial_2y^{kl}_{mn} \\ [2pt]
(-\Delta)_{mn}^{1/4}\br\omega^{kl}_3 \end{array}
\right)+\curl\left( \begin{array}{c} 0 \\ 0 \\ z^{kl}_{mn}
\end{array} \right) \qquad \mbox{in}\ C^{mn}.
\end{displaymath}
Hence
\begin{align}
& (\curl\, \bfomega^{mn},\bfomega^{kl})_{2;\, C^{mn}}\ =\
(\bfw^{mn},\bfomega^{kl})_{2;\, C^{mn}}\ =\ \int_{C^{mn}}
(-\Delta)_{mn}^{-1/4}\br\bfw^{mn}\cdot(-\Delta)_{mn}^{1/4}\br
\bfomega^{kl}\; \rmd\bfx \nonumber \\
\noalign{\vskip 0pt}
& =\ \int_{C^{mn}} \left[(-\Delta)_{mn}^{-1/4}\br\bfw^{mn}\cdot
\left(
\begin{array}{c} \partial_1y^{kl}_{mn} \\ [2pt] \partial_2y^{kl}_{mn} \\
[2pt] (-\Delta)_{mn}^{1/4}\br\omega^{kl}_3 \end{array}
\right)+(-\Delta)_{mn}^{-1/4}\curl^2\bfomega^{mn}\cdot
\left(\begin{array}{c}
0 \\ 0 \\ z^{kl}_{mn} \end{array} \right)\right]\; \rmd\bfx \nonumber \\
\noalign{\vskip 2pt}
& =\ \int_{C^{mn}} \left[(-\Delta)_{mn}^{-1/4}\br\bfw^{mn}\cdot
\left(
\begin{array}{c} \partial_1y^{kl}_{mn} \\ [2pt]
\partial_2y^{kl}_{mn} \\ [2pt] (-\Delta)_{mn}^{1/4}\br\omega^{kl}_{3}
\end{array} \right)+(-\Delta)_{mn}^{3/4}\br\omega^{mn}_3\,
z^{kl}_{mn}\right]\; \rmd\bfx \nonumber \\ \noalign{\vskip 4pt}
& =\ \int_{C^{mn}}\bigl\{ (-\Delta)_{mn}^{-1/4}\bfw^{mn}_{2D}\cdot
\nabla_{2D}y^{kl}_{mn}+(-\Delta)_{mn}^{-1/4}\br w^{mn}_3\,
(-\Delta)_{mn}^{1/4}\br \omega^{kl}_3+
(-\Delta)_{mn}^{3/4}\omega^{mn}_3\,
z^{kl}_{mn}\bigr\}\; \rmd\bfx \nonumber \\
\noalign{\vskip 2pt}
& \leq\ c\, \|\bfomega^{mn}\|_{1/2,2;\, C^{mn}}\,
\|\nabla_{2D}^{}y^{kl}_{mn}\|_{2;\, C^{mn}}+c\,
\|\bfomega^{mn}\|_{1/2,2;\, C^{mn}}\, \|\omega^{kl}_3\|_{1/2,2;\,
C^{mn}} \nonumber \\ \noalign{\vskip 4pt}
& \hspace{40pt} +\|\omega^{mn}_3\bigr\|_{3/4,2;\, C^{mn}}\,
\|z^{kl}_{mn}\|_{2;\, C^{mn}}\, \nonumber
\\ \noalign{\vskip 4pt}
& \leq\ c\, \|\bfomega^{mn}\|_{1/2,2;\, C^{mn}}\,
\|(-\Delta)_{mn}^{1/4}\br(\partial_3\omega^{kl}_3)\|_{2;\,
C^{mn}}+c\, \|\bfomega^{mn}\|_{1/2,2;\, C^{mn}}\,
\|\omega^{kl}_3\|_{1/2,2;\, C^{mn}} \nonumber \\ \noalign{\vskip
4pt}
& \hspace{40pt} +c\, \|\omega^{mn}_3\bigr\|_{3/2,2;\, C^{mn}}\,
\bigl( \|(-\Delta)_{mn}^{1/4}\bfomega^{kl}\|_{2;\, C^{mn}}+
\|(-\Delta)_{mn}^{1/4}\br(\partial_3 \omega^{kl}_3)\|_{2;\,
C^{mn}}\bigr). \label{2.12a}
\end{align}
Each term on the right hand side contains some norm of
$\omega^{mn}_3$ ($=\eta^{mn}\omega^+_3$) or $\omega^{kl}_3$
($=\eta^{kl}\omega^+_3$). This is how the third component
$\omega^+_3$ controls the scalar product $(\curl\,
\bfomega^{mn},\bfomega^{kl})_{2;\, C^{mn}}$. The right hand side
of (\ref{2.12a}) is further less than or equal to
\begin{align}
c\, & \|\bfomega^{mn}\|_{1/2,2;\, C^{mn}}\,
\|\omega^{kl}_3\|_{3/2,2;\, C^{mn}}+ c\,
\|\omega^{mn}_3\|_{3/2,2;\, C^{mn}}\, \|\bfomega^{kl}\|_{3/2,2;\,
C^{mn}} \nonumber \\ \noalign{\vskip 4pt}
& \hspace{40pt} +c\, \|\omega^{mn}_3\|_{3/2,2;\, C^{mn}}\,
\|\omega^{kl}_3\|_{3/2,2;\, C^{mn}} \nonumber \\
\noalign{\vskip 4pt}
& \leq\ \delta\, \|\bfomega^{mn}\|_{1/2,2;\, C^{mn}}^2+\delta\,
\|\bfomega^{kl}\|_{1/2,2;\, C^{mn}}^2+ c(\delta)\,
\|\omega^{mn}_3\|_{3/2,2;\, C^{mn}}^2+c(\delta)\,
\|\omega^{kl}_3\|_{3/2,2;\, C^{mn}}^2 \nonumber \\ \noalign{\vskip
4pt}
& \leq\ \delta\, \|\eta^{mn}\bfomega^+\|_{1/2,2;\,
C^{mn}}^2+\delta\, \|\bfV^{mn}\|_{1/2,2;\, C^{mn}}^2+\delta\,
\|\eta^{kl}\bfomega^+\|_{1/2,2;\, C^{mn}}^2+\delta\,
\|\bfV^{kl}\|_{1/2,2;\, C^{mn}}^2 \nonumber \\ \noalign{\vskip
4pt}
& \hspace{40pt} + c(\delta)\, \|\eta^{mn}\omega^+_3\|_{3/2,2;\,
C^{mn}}^2+c(\delta)\, \|\eta^{kl}\omega^+_3\|_{3/2,2;\,
C^{mn}}^2\br. \label{2.12}
\end{align}
The norm $\|\bfV^{mn}\|_{1/2,2;\, C^{mn}}$ can be estimated by
means of (\ref{2.4}). Since $\bfV^{kl}$ is supported inside
$C^{mn}$, one can also derive (by analogy with (\ref{2.4})) that
$\|\bfV^{kl}\|_{1/2,2;\, C^{mn}}\leq c\, \|\bfomega^+\|_{1/2,2;\,
C^{mn}}$. Furthermore the norm $\|\eta^{mn}\bfomega^+\|_{1/2,2;\,
C^{mn}}$ can be estimated by $c\, \|\bfomega^+\|_{1/2,2;\,
C^{mn}}$. (This can be easily proven in the same way as Theorem
I.7.3 in \cite{LiMa}.) The other terms on the right hand side of
(\ref{2.12}) that contain functions $\eta^{mn}$ or $\eta^{kl}$ can
be estimated similarly. Thus, (\ref{2.12}) yields
\begin{equation}
(\curl\, \bfomega^{mn},\bfomega^{kl})_{2;\, C^{mn}}\ \leq\
\delta\, c\, \|\bfomega^+\|_{1/2,2;\, C^{mn}}^2+c(\delta)\,
\|\omega^+_3\|_{3/2,2;\, C^{mn}}^2\br. \label{2.10}
\end{equation}

\noindent
{\bf The estimate of the right hand side of (\ref{2.3}).} \ The
sum $\sum_{m,n\in\Z}$ in (\ref{2.3}) can be split to twenty five
parts, which successively contain the sums over $m=0$ mod $5$,
$\dots,$ $m=4$ mod $5$ and $n=0$ mod $5$, $\dots,$ $n=4$ mod $5$.

Let us consider e.g.~the case $m,n\in\Z$, $m=0$ mod $5$, $n=0$ mod
$5$ (i.e.~$m$ and $n$ are integer multiples of $5$). Denote the
sum over these $m,\, n$ by $\sum_{m,n\in\Z}^{(1)}$, and the sums
over twenty four other possibilities by $\sum_{m,n\in\Z}^{(2)}$,
$\dots,$ $\sum_{m,n\in\Z}^{(25)}$. The cylinders $C^{mn}$
corresponding to the first case are disjoint and their union
equals $\R^3$ up to the set of measure zero. Applying
(\ref{2.10}), we have
\vspace{-4pt}
\begin{align}
{\sum_{m,n\in\Z}}^{\hspace{-4pt} (1)}\ &
\sum_{\substack{k\in\{m-1;\, m;\, m+1\} \\ l\in\{n-1;\, n;\,
n+1\}}}\, (\curl\, \bfomega^{mn},\bfomega^{kl})_{2;\, C^{mn}}
\nonumber \\ \noalign{\vskip 0pt}
& \leq\ \delta\, c\, {\sum_{m,n\in\Z}}^{\hspace{-4pt} (1)}
\|\bfomega^+\|_{1/2,2;\, C^{mn}}^2+c(\delta)
{\sum_{m,n\in\Z}}^{\hspace{-4pt} (1)}\|\omega^+_3\|_{3/2,2;\,
C^{mn}}^2\br. \label{2.11}
\end{align}

\vspace{-8pt} \noindent
Obviously, the $L^2$--norms and $W^{1,2}$--norms of $\bfomega^+$
satisfy the identities
\vspace{-4pt}
\begin{displaymath}
{\sum_{m,n\in\Z}}^{\hspace{-4pt} (1)} \|\bfomega^+\|_{2;\,
C^{mn}}^2\ =\ \|\bfomega^+\|_{2;\,\R^3}^2 \quad \mbox{and} \quad
{\sum_{m,n\in\Z}}^{\hspace{-4pt} (1)} \|\bfomega^+\|_{1,2;\,
C^{mn}}^2\ =\ \|\bfomega^+\|_{1,2;\,\R^3}^2\br.
\end{displaymath}

\vspace{-4pt} \noindent
Applying appropriately the theorem on interpolation (see
\cite[Theorem I.5.1]{LiMa}), we derive that
\vspace{-4pt}
\begin{displaymath}
{\sum_{m,n\in\Z}}^{\hspace{-4pt} (1)}\br\|\bfomega^+\|_{1/2,2;\,
C^{mn}}^2\ \leq\ c\, \|\bfomega^+\|_{1/2,2;\, \R^3}^2\br.
\end{displaymath}

\vspace{-4pt} \noindent
The norms $\|\omega^+_3\|_{3/2,2;\, C^{mn}}$ and
$\|\omega^+_3\|_{3/2,2;\, \R^3}$ satisfy the same inequalities.
Applying these inequalities, and estimating the sums
$\sum_{m,n\in\Z}^{(2)}$, $\dots,$ $\sum_{m,n\in\Z}^{(25)}$ in the
same way as the sum in (\ref{2.11}), we get
\vspace{-8pt}
\begin{align*}
\|A^{1/2}\bfomega^+\|_{2;\, \R^3}^2\ & \leq\
\sum_{m,n\in\Z}\ \ \sum_{\substack{k\in\{m-1;\, m;\, m+1\} \\
l\in\{n-1;\, n;\, n+1\}}}\, (\curl\,
\bfomega^{mn},\bfomega^{kl})_{2;\, C^{mn}} \nonumber \\
\noalign{\vskip 0pt}
& \leq\ \delta\, c\, \|\bfomega^+\|_{1/2,2;\, \R^3}^2+c(\delta)\,
\|\omega^+_3\|_{3/2,2;\, \R^3}^2\br.
\end{align*}
The first term on the right hand side is less than or equal to
$\delta\, c\, \bigl(\|\bfomega^+\|_{2;\,
\R^3}^2+\|A^{1/2}\bfomega^+\|_{2;\, \R^3}^2\bigr)$. Choosing
$\delta>0$ so small that $\delta\, c\leq\frac{1}{2}$, and
estimating $\|\omega^+_3\|_{3/2,2;\, \R^3}^2$ from above by
$\|\omega^+_3\|_{2;\, \R^3}^2+\|(-\Delta)^{3/4}\omega^+_3\|_{2;\,
\R^3}^2$, we finally obtain \phantom{$\cn05\cn06$}
\begin{equation}
\|A^{1/2}\bfomega^+\|_{2;\, \R^3}^2\ \leq\ \cc05\,
\|\bfomega^+\|_{2 ;\, \R^3}^2+\cc06\,
\|(-\Delta)^{3/4}\omega^+_3\|_{2;\, \R^3}^2\br.
\label{2.14}
\end{equation}

\noindent
{\bf Completion of the proof.} \ Substituting estimate
(\ref{2.14}) to (\ref{2.2}), we get
\begin{align}
\frac{\rmd}{\rmd\br t}\ & \frac{1}{2}\ \|A^{1/2}\bfv\|_{2;\,
\R^3}^2+\|A^{3/2}\bfv\|_{2;\, \R^3}^2 \nonumber \\
\noalign{\vskip 4pt}
&\leq\ 4\cc01^6\, \|A^{1/2}\bfv\|_{2;\, \R^3}^2\, \bigl( \cc05\,
\|\bfomega^+\|_{2 ;\, \R^3}^2+\cc06\,
\|(-\Delta)^{3/4}\omega^+_3\|_{2;\, \R^3}^2\bigr).
\label{2.19}
\end{align}
Recall that this inequality holds for $t\in
(a_{\gamma},b_{\gamma})$. The expression in parentheses on the
right hand side is integrable as a function of $t$ in $(0,T)$.
Thus, we can again choose $\tau\in(a_{\gamma},b_{\gamma})$ and
apply Gronwall's inequality to (\ref{2.19}) on the interval
$[\tau,b_{\gamma})$. This is how we show that
$\|A^{1/2}\bfv\|_{2;\, \R^3}$ is bounded on $[\tau,b_{\gamma})$.
Consequently, $b_{\gamma}$ cannot be the epoch of irregularity of
solution $\bfv$ and $\|A^{1/2}\bfv\|_{2;\, \R^3}$ is therefore
bounded on $[\tau,T)$. Since $\tau$ can be chosen arbitrarily
close to $0$, we have proven that $\|A^{1/2}\bfv\|_{2;\, \R^3}$ is
bounded on each interval of the type $(\vartheta,T)$ for
$0<\vartheta<T$.

If solution $\bfv$ satisfies condition (b) of Theorem \ref{T1}
then the initial velocity $\bfv_0$ belongs to the space
$D(A^{1/2})$. Hence there exists $T^*\in(0,T]$ and a strong
solution $\bfv^*$ of the problem (\ref{1.1})--(\ref{1.3}), whose
norm $\|A^{1/2}\bfv^*\|_{2;\, \R^3}$ is locally bounded on
$[0,T^*)$. (See e.g.~\cite[Section V.4]{So}.) The considered weak
solution $\bfv$ coincides with $\bfv^*$ on $(0,T^*)$ by the
theorem on uniqueness, see \cite[Theorem 4.2]{Ga2}. (This is the
point where we use the fact that $\bfv$ satisfies (EI).) The time
instant $T^*$ is either an epoch of irregularity (if
$\|A^{1/2}\bfv(t)\|_{2;\, \R^3}\to\infty$ for $t\to T^*-$) or
$T*=T$ and $\|A^{1/2}\bfv\|_{2;\, \R^3}$ is bounded on $(0,T)$.
Repeating the procedure from the previous paragraphs, we can show
that $T^*$ cannot be the epoch of irregularity. Thus,
$\|A^{1/2}\bfv\|_{2;\, \R^3}$ is bounded on $(0,T)$ and solution
$\bfv$ has therefore no singular points in $Q_T$.

\section{Proof of Theorem \ref{T2}}
\label{S4}

We can at first copy the proof of Theorem \ref{T1} in Section
\ref{S2} up to inequality (\ref{2.2}). Instead of ``the case of
condition (i)'', we consider ``the case of condition (iii)''.
Recall that $F_{\lambda}$ (the resolution of identity associated
with operator $A$) is, for $\lambda\geq 0$, related to
$E_{\lambda}$ (the resolution of identity associated with operator
$\curl$) by the formula $F_{\lambda}=E_{\lambda}-E_{-\lambda}$.
Thus, for $t\in(a_{\gamma},b_{\gamma})$ we have
\begin{align}
\|A^{1/2} & \bfomega^+(t)\|_{2;\, \R^3}^2\ =\
\bigl(A\bfomega^+(t),\br\bfomega^+(t)\bigr)_{2;\, \R^3}\ =\
\int_0^{\infty}\lambda\; \rmd\bigl(F_{\lambda}\bfomega^+(t),\br
\bfomega^+(t)\bigr)_{2;\, \R^3} \nonumber \\ \noalign{\vskip 0pt}
&=\ \int_0^{\infty}\lambda\;
\rmd\bigl((E_{\lambda}-E_{-\lambda})\bfomega^+(t),\br
\bfomega^+(t)\bigr)_{2;\, \R^3}\ =\ \int_0^{\infty}\lambda\;
\rmd\bigl(E_{\lambda}\bfomega^+(t),\br \bfomega^+(t)\bigr)_{2;\,
\R^3} \nonumber \\ \noalign{\vskip 0pt}
& =\ \int_0^{a_+(t)}\lambda\; \rmd\bigl(E_{\lambda}A\bfv^+(t),\br
A\bfv^+(t)\bigr)_{2;\, \R^3}+\int_{a_+(t)}^{\infty}\lambda\;
\rmd\bigl(E_{\lambda}\bfomega^+(t),\br\bfomega^+(t)\bigr)_{2;\,
\R^3}. \label{4.1}
\end{align}
(We have used the identity $E_{-\lambda}\bfomega^+(t)=\bfzero$ for
$\lambda\geq 0$.) As in Section \ref{S2}, we further omit writing
$(t)$. The first integral on the right hand side of (\ref{4.1})
equals \phantom{$\cn15$}
\vspace{0pt}
\begin{equation}
\int_0^{a_+}\lambda^3\; \rmd\bigl(E_{\lambda}\bfv^+,\br
\bfv^+\bigr)_{2;\, \R^3}\ \leq\ a_+^3\int_0^{a_+}
\rmd\bigl(E_{\lambda}\bfv^+,\br\bfv^+\bigr)_{2;\, \R^3}\ \leq
a_+^3\, \|\bfv^+\|_{2;\, \R^3}^2\ \leq\ \cc15\, a_+^3,
\label{4.2}
\end{equation}
where $\cc15$ is the essential upper bound of $\|\bfv\|_{2;\,
\R^3}^2$ on $(0,T)$.

Let us now deal with the second integral on the right hand side of
(\ref{4.1}). If $a\geq 0$ then $\bfomega^+$ can be expressed as
the sum $\bfomega_{(0,a)}+\bfomega_a^+$, where
$\bfomega_{(0,a)}:=\curl\, \bfv_{(0,a)}$ and $\bfv_{(0,a)}:=
\int_0^a\rmd E_{\lambda}\bfv=(E_a-E_0)\bfv$. Thus,
$E_{\lambda}\bfomega^+$ (for $\lambda\geq a=a_+$) equals
$E_{\lambda}\bfomega_{(0,a)}+E_{\lambda}\bfomega_a^+=
\bfomega_{(0,a)}+E_{\lambda}\bfomega_a^+$. The differential of
$(E_{\lambda}\bfomega^+,\bfomega^+)_{2;\, \R^3}$ with respect to
variable $\lambda$ is
\begin{align*}
\rmd(E_{\lambda}\bfomega^+,\bfomega^+)_{2;\, \R^3}\ &=\
\rmd(\bfomega_{(0,a)},\bfomega^+)_{2;\,
\R^3}+\rmd(E_{\lambda}\bfomega_a^+,\bfomega^+)_{2;\, \R^3}\ =\
\rmd(E_{\lambda}\bfomega_a^+,\bfomega^+)_{2;\, \R^3} \\
\noalign{\vskip 2pt}
&=\ \rmd(E_{\lambda}\bfomega_a^+,\bfomega_a^+)_{2;\, \R^3}+
\rmd(E_{\lambda}\bfomega_a^+,\bfomega_{(0,a)})_{2;\, \R^3}\ =\
\rmd(E_{\lambda}\bfomega_a^+,\bfomega_a^+)_{2;\, \R^3}.
\end{align*}
(The last equality holds because $E_{\lambda}\bfomega^+_a$ and
$\bfomega_{(0,a)}$ are orthogonal in $\bfL^2(\R^3)$.) Hence
\begin{align}
\int_{a_+}^{\infty}\lambda\; &\rmd\bigl(E_{\lambda}\bfomega^+,\br
\bfomega^+\bigr)_{2;\, \R^3}\ =\ \int_{a_+}^{\infty}\lambda\;
\rmd\bigl(E_{\lambda}\bfomega_a^+,\br \bfomega_a^+\bigr)_{2;\,
\R^3}\ =\ \int_{a_+}^{\infty}\lambda\;
\rmd\bigl(F_{\lambda}\bfomega_a^+,\br \bfomega_a^+\bigr)_{2;\,
\R^3} \nonumber \\
&=\ \|A^{1/2}\bfomega_a^+\|_{2;\, \R^3}^2. \label{4.3}
\end{align}
Similarly, if $a<0$ then
$\bfomega_a^+=\bfomega_{(a,0)}+\bfomega^+$, where
$\bfomega_{(a,0)}:=\curl\, \bfv_{(a,0)}$ and $\bfv_{(a,0)}:=
\int_a^0\rmd E_{\lambda}\bfv=(E_0-E_a)\bfv$. For $\lambda\geq 0$,
we have $E_{\lambda}\bfomega_a^+=E_{\lambda}\bfomega_{(a,0)}+
E_{\lambda}\bfomega^+=\bfomega_{(a,0)}+E_{\lambda}\bfomega^+$ and
\begin{align*}
\rmd(E_{\lambda}\bfomega^+_a,\bfomega^+_a)_{2;\, \R^3}\ &=\
\rmd(\bfomega_{(a,0)},\bfomega^+_a)_{2;\,
\R^3}+\rmd(E_{\lambda}\bfomega^+,\bfomega^+_a)_{2;\, \R^3}\ =\
\rmd(E_{\lambda}\bfomega^+,\bfomega^+_a)_{2;\, \R^3} \\
\noalign{\vskip 2pt}
&=\ \rmd(E_{\lambda}\bfomega^+,\bfomega_{(a,0)})_{2;\, \R^3}+
\rmd(E_{\lambda}\bfomega^+,\bfomega^+)_{2;\, \R^3}\ =\
\rmd(E_{\lambda}\bfomega^+,\bfomega^+)_{2;\, \R^3}.
\end{align*}
Hence
\begin{align}
\int_{a_+}^{\infty}\lambda\; &\rmd\bigl(E_{\lambda}\bfomega^+,\br
\bfomega^+\bigr)_{2;\, \R^3}\ =\ \int_{0}^{\infty}\lambda\;
\rmd\bigl(E_{\lambda}\bfomega_a^+,\br \bfomega_a^+\bigr)_{2;\,
\R^3} \nonumber \\
&=\ \int_{0}^{\infty}\lambda\;
\rmd\bigl(F_{\lambda}\bfomega_a^+,\br \bfomega_a^+\bigr)_{2;\,
\R^3}+\int_{0}^{\infty}\lambda\;
\rmd\bigl(E_{-\lambda}\bfomega_a^+,\br \bfomega_a^+\bigr)_{2;\,
\R^3} \nonumber \\
&=\ \|A^{1/2}\bfomega_a^+\|_{2;\, \R^3}^2+ \int_{0}^{-a}\lambda\;
\rmd\bigl(E_{-\lambda}\bfomega_a^+,\br \bfomega_a^+\bigr)_{2;\,
\R^3} \nonumber \\
&=\ \|A^{1/2}\bfomega_a^+\|_{2;\, \R^3}^2+\int_{0}^{-a}(-\zeta)\;
\rmd\bigl(E_{\zeta}\bfomega_a^+,\br \bfomega_a^+\bigr)_{2;\,
\R^3}\ \leq\ \|A^{1/2}\bfomega_a^+\|_{2;\, \R^3}^2. \label{4.4}
\end{align}
We observe from (\ref{4.3}) and (\ref{4.4}) that for any value of
$a$, the second integral on the right hand of (\ref{4.1}) is less
than or equal to $\|A^{1/2}\bfomega_a^+\|_{2;\, \R^3}^2$. Thus,
applying also (\ref{4.2}), we obtain
\begin{equation}
\|A^{1/2}\bfomega^+\|_{2;\, \R^3}^2\ \leq\ \cc15\,
a_+^3+\|A^{1/2}\bfomega^+_a\|_{2;\, \R^3}^2. \label{4.5}
\end{equation}
Condition (iii) of Theorem \ref{T2} implies that the right hand
side of (\ref{4.5}) is integrable on the interval $(0,T)$. The
proof of Theorem \ref{T2} can now be completed in the same way as
the proof of Theorem \ref{T1} in the paragraph ``the case of
condition (i)'' in Section \ref{S2}.

Let us further assume that condition (iv) holds. Let us at first
suppose that $a\geq 0$, i.e.~$a=a_+$. In order to estimate
$\|A^{1/2}\bfomega^+_a\|_{2;\, \R^3}$, we can copy the proof of
Theorem \ref{T1} from ``the case of condition (ii)'' (which is now
replaced by ``the case of condition (iv)'') up to (\ref{2.14}); we
only consider $\bfomega^+_a$ instead of $\bfomega^+$ and
$\omega^+_{a3}$ instead of $\omega^+_3$. By analogy with
(\ref{2.14}), we obtain
\begin{equation}
\|A^{1/2}\bfomega^+_a\|_{2;\, \R^3}^2\ \leq\ \cc05\,
\|\bfomega^+_a\|_{2;\, \R^3}^2+\cc06\,
\|(-\Delta)^{3/4}\omega^+_{a3}\|_{2;\, \R^3}^2\br.
\label{4.6}
\end{equation}
Inequalities (\ref{4.5}) and (\ref{4.6}) yield
\begin{equation}
\|A^{1/2}\bfomega^+\|_{2;\, \R^3}^2\ \leq\ \cc15\, a_+^3+ \cc05\,
\|\bfomega^+_a\|_{2;\, \R^3}^2+\cc06\,
\|(-\Delta)^{3/4}\omega^+_{a3}\|_{2;\, \R^3}^2\br.
\label{4.7}
\end{equation}
Further, we suppose that $a<0$. Now, estimate (\ref{4.6}) is not
true due to this reason: the derivation of (\ref{4.6}) requires
the identity
\begin{equation}
(A\bfomega^+_a,\bfomega^+_a)_{2;\, \R^3}\ =\ (\curl\,
\bfomega^+_a,\bfomega^+_a)_{2;\, \R^3}, \label{4.10}
\end{equation}
analogous to the identity $(A\bfomega^+,\bfomega^+)_{2;\,
\R^3}=(\curl\, \bfomega^+,\bfomega^+)_{2;\, \R^3}$, which was used
in (\ref{2.3}) and which lead to (\ref{2.14}). However, while
(\ref{4.10}) holds in the case $a\geq 0$, it does not hold for
$a<0$ (which we now assume). Thus, we begin the estimation of
$\|A^{1/2}\bfomega^+\|_{2;\, \R^3}^2$ from (\ref{2.14}). In order
to estimate the term $\|(-\Delta)^{3/4}\omega_3^+\|_{2;\, \R^3}^2$
on the right hand side of (\ref{2.14}), we write
$\bfomega^+_a=\bfomega_{(a,0)}+\bfomega^+$. The same formula also
holds for the third components: $\,
\omega^+_{a3}=\omega_{(a,0),3}+\omega^+_3$. This yields
$\omega^+_3=\omega^+_{a3}-\omega_{(a,0),3}$ and
\begin{equation}
\|(-\Delta)^{3/4}\omega^+_3\|_{2;\, \R^3}^2\ \leq\
\|(-\Delta)^{3/4}\omega^+_{a3}\|_{2;\, \R^3}^2+
\|(-\Delta)^{3/4}\omega_{(a,0),3}\|_{2;\, \R^3}^2, \label{4.9}
\end{equation}
where
\begin{align*}
\|(- & \Delta)^{3/4}\omega_{(a,0),3}\|_{2;\, \R^3}^2\ \leq\
\int_0^{\infty}\rmd\bigl(F_{\lambda}A^{3/2}\bfomega_{(a,0)},\br
A^{3/2}\bfomega_{(a,0)}\bigr)_{2;\, \R^3} \nonumber \\
&=\ \int_0^{\infty}\lambda^3\, \rmd\bigl(F_{\lambda}
\bfomega_{(a,0)},\br\bfomega_{(a,0)}\bigr)_{2;\, \R^3}\ =\
\int_0^{\infty}\lambda^3\, \rmd\bigl((E_{\lambda}-E_{-\lambda})
\bfomega_{(a,0)},\br \bfomega_{(a,0)}\bigr)_{2;\, \R^3}\ \nonumber
\\ \noalign{\vskip 2pt}
&=\ -\int_0^{\infty}\lambda^3\, \rmd\bigl(E_{-\lambda}
\bfomega_{(a,0)},\br \bfomega_{(a,0)}\bigr)_{2;\, \R^3}.
\end{align*}
The last equality holds because
$E_{\lambda}\bfomega_{(a,0)}=\bfomega_{(a,0)}$ for $\lambda>0$,
which means that \\
$\rmd\bigl(E_{\lambda}\bfomega_{(a,0)},\br\bfomega_{(a,0)}\bigr)_{2;\,
\R^3}=0$. Further, we have
\begin{displaymath}
-\int_0^{\infty}\lambda^3\, \rmd\bigl(E_{-\lambda}
\bfomega_{(a,0)},\br \bfomega_{(a,0)}\bigr)_{2;\, \R^3}\ =\
-\int_0^{-a}\lambda^3\, \rmd\bigl(E_{-\lambda}
\bfomega_{(a,0)},\br \bfomega_{(a,0)}\bigr)_{2;\, \R^3}
\end{displaymath}
because $E_{-\lambda}\bfomega_{(a,0)}=\bfzero$ for $-\lambda<a$,
i.e. $\lambda>-a$. Using the substitution $\lambda=-\zeta$, the
last integral transforms to
\begin{equation}
\hspace{-10pt} -\int_0^{-a}(-\zeta)^3\,
\rmd\bigl(E_{\zeta}\bfomega_{(a,0)},\br
\bfomega_{(a,0)}\bigr)_{2;\, \R^3}\ =\ \int_0^{|a_-|}\zeta^5\,
\rmd\bigl(E_{\zeta}\bfv_{(a,0)},\br \bfv_{(a,0)}\bigr)_{2;\,
\R^3}\ \leq\ \cc15\, |a_-|^5. \label{4.8}
\end{equation}
Using now (\ref{2.14}), (\ref{4.9}) and (\ref{4.8}), we obtain the
inequality
\begin{equation}
\|A^{1/2}\bfomega^+\|_{2;\, \R^3}^2\ \leq\ \cc05\,
\|\bfomega^+\|_{2 ;\, \R^3}^2+\cc06\,
\|(-\Delta)^{3/4}\omega^+_{a3}\|_{2;\, \R^3}^2+\cc06\br\cc15\,
|a_-|^5.
\label{4.11}
\end{equation}
Both the right hand sides of (\ref{4.7}) and (\ref{4.11}) are
integrable, as functions of variable $t$, on the interval $(0,T)$
due to condition (iv) of Theorem \ref{T2}. The proof can now be
again finished in the same way as the proof of Theorem \ref{T1} in
Section \ref{S2}.

\section{Concluding remarks}
\label{S3}

\begin{remark}[the meaning of functions $\bfv^+$ and $\bfomega^+$]
\label{R1} \rm
Using the spectral resolution of identity $\{E_{\lambda}\}$
associated with operator $\curl$, we can express velocity $\bfv$
and the corresponding vorticity $\bfomega$ by the formulas
\vspace{-6pt}
\begin{equation}
\bfv=\int_{-\infty}^{\infty}\rmd E_{\lambda}(\bfv), \qquad
\bfomega=\int_{-\infty}^{\infty}\lambda\; \rmd E_{\lambda}(\bfv)=
\int_{-\infty}^{\infty}\rmd E_{\lambda}(\bfomega).
\label{3.1}
\end{equation}
In accordance with the heuristic understanding of the definite
integral, we can interpret the first integral in (\ref{3.1}) as a
sum of ``infinitely many'' contributions $\rmd E_{\lambda}(\bfv)$,
each of whose is an ``infinitely small'' Beltrami flow. (Recall
that {\it Beltrami flows} are flows, whose vorticity is parallel
to the velocity. Here, concretely, $\curl\, \rmd
E_{\lambda}(\bfv)=\lambda\; \rmd E_{\lambda}(\bfv)$.) Function
$\bfv^+$ can now be understood to be the sum of only those
``infinitely many'' ``infinitely small'' contributions, whose
vorticity is a positive multiple of velocity. (We call them the
{\it positive Beltrami flows.})
\end{remark}

\begin{remark}[flow in the neighbourhood of a singularity]
\label{R2} \rm
Theorem \ref{T1} is also true if $\bfomega^+$ (respectively
$\omega_3^+$) is replaced by $\bfomega^-$ (respectively
$\omega_3^-$). Thus, both the conditions (i) and (ii) show that if
weak solution $\bfv$ has a singular point then the singularity
must contemporarily develop in the ``positive part'' $\bfv^+$  of
function $\bfv$ (the contribution to $\bfv$ coming from the
positive Beltrami flows) as well as in the ``negative part''
$\bfv^-$ (the contribution from the negative Beltrami flows). The
singularity must even develop at the same spatial point. (This can
be proven by an appropriate localization procedure.)
\end{remark}

\begin{remark}[the role of large frequencies]
\label{R3} \rm
Suppose, for simplicity, that function $a$ considered in Theorem
\ref{T2} is positive. Then projection $P^+_{a}$ defined by
(\ref{G1}) can be interpreted as a reduction to the positive
Beltrami flows with ``high frequencies'', concretely the
frequencies comparable to $a$ and higher. Theorem \ref{T2} shows
that if a singularity develops in solution $\bfv$, then it must
especially develop in the part of $\bfv$ (respectively its
vorticity $\bfomega$) that consists of positive Beltrami flows
with the ``large'' frequencies (i.e.~$\sim a$ and higher). Since
the functions $a_+$, $\bfomega^+_a$ and $\omega_{a3}^+$ can be
replaced by $a_-$, $\bfomega^-_a$ and $\omega_{a3}^-$ in Theorem
\ref{T2}, the singularity must also develop in the part of $\bfv$
(respectively vorticity $\bfomega$) that consists of negative
Beltrami flows with ``large'' frequencies. The singularities must
appear in both the parts at the same space--time point.
\end{remark}

\begin{remark}
\label{R4} \rm
If function $a$ in Theorem \ref{T2} identically equals $-\infty$
in $(0,T)$ then $P^+_{a}=I$ and $\bfomega^+_a=\bfomega$ in
$(0,T)$. In this case, condition (iii) is the condition on the
whole vorticity $\bfomega$, and it requires that $\bfomega\in
L^2(0,T;\, D(S^{1/4}))$. (Recall that $S$ is the Stokes operator
in $\Ls$.) The space $D(S^{1/4})$ is continuously imbedded in
$\bfL^3(\R^3)$. Besides that, it is known that if $\bfomega\in
L^2(0,T;\, \bfL^3(\R^3))$ then solution $\bfv$ has no singular
points in $Q_T$, see e.g.~\cite{BdV}. This comparison (made for
$a\equiv -\infty$) gives hope that condition (iii) might be
perhaps generalized so that it would only require $\bfomega^+_a
\in L^2(0,T;\, \bfL^3(\R^3))$ instead of $\bfomega^+_a \in
L^2(0,T;\, D(S^{1/4}))$ also for other functions $a$. Similar
generalizations might also concern conditions (i), (ii) and (iv).
\end{remark}

\vspace{4pt} \noindent
{\bf Acknowledgments.} \ The research was supported by the Grant
Agency of the Czech Republic (grant No.~201/08/0012), by the
Academy of Sciences of the Czech Republic (RVO 67985840) and by
the University Sud, Toulon--Var, Laboratoire SNC.


\noindent
{\it Author's addresses:} \nopagebreak

\bigskip \noindent
\begin{minipage}{62mm}
\leftline{Ji\v r\'\i\ Neustupa}
\leftline{Czech Academy of Sciences}
\leftline{Mathematical Institute}
\leftline{\v Zitn\'a 25, 115 67 Praha 1}
\leftline{Czech Republic}
\leftline{neustupa@math.cas.cz}
\end{minipage}
\kern 10mm
\begin{minipage}{62mm}
\leftline{Patrick Penel}
\leftline{Universit\'e du Sud, Toulon--Var}
\leftline{Dep.~Math\'ematique}
\leftline{BP 20132, 83957 La Garde}
\leftline{France}
\leftline{penel@univ-tln.fr}
\end{minipage}

\end{document}